\documentclass[11pt,a4paper]{article}

\usepackage{epsf,epsfig,amsfonts,amsgen,amsmath,amstext,amsbsy,amsopn,amsthm}
\usepackage{amsmath,times,mathptmx}
\usepackage{amsfonts,amsthm,amssymb}
\usepackage{amsfonts}
\usepackage{graphics}
\usepackage{latexsym,bm}
\usepackage{amsfonts,amsthm,amssymb,bbding}
\usepackage{indentfirst}
\usepackage{graphicx}
\usepackage{color}
\usepackage[colorlinks=true,anchorcolor=blue,filecolor=blue,linkcolor=blue,urlcolor=blue,citecolor=blue]{hyperref}
\usepackage{float}
\usepackage{tikz}

\evensidemargin=\oddsidemargin\topmargin=-1.5cm

\newtheorem{thm}{Theorem}[section]

\newtheorem{lem}{Lemma}[section]

\newtheorem{claim}{Claim}[section]
\newtheorem{definition}{Definition}[section]

\addtocounter{section}{0}

\begin{document}
\title{Clique supersaturation under a chromatic constraint below the Tur\'{a}n threshold
\footnote{Supported by the National Natural Science Foundation of China (Nos. 12471331 and 12501471).}}

\author{{\bf Benju Wang$^{a,b}$},
{\bf Longfei Fang$^{b}$}, {\bf Jinlong Shu$^{c}$}\thanks{Corresponding author. E-mail address: jlshu@shnu.edu.cn (J. Shu)}
\\
\small $^{a}$ School of Mathematics and Science, Shanghai Normal University, Shanghai 200234, China\\
\small $^{b}$ School of Mathematics and Finance, Chuzhou University, Chuzhou, Anhui 239012, China\\
\small $^{c}$ School of Finance and Business, Shanghai Normal University, Shanghai 200234, China}

\date{}
\maketitle
{\flushleft\large\bf \centerline{Abstract}}
A central theme in extremal graph theory is the supersaturation problem, which
investigates the minimum number of copies of a target subgraph forced by
prescribed edge conditions. This line of research goes back to Rademacher and
Erd\H{o}s for triangles, and was later extended to cliques by Lov\'asz and
Simonovits in the regime above the Tur\'an threshold. Mubayi further extended
this theory to color-critical graphs. Below the Tur\'an threshold, a closely
related existence-threshold phenomenon arises in the non-$p$-partite setting:
a classical result of Brouwer shows that, for $n\ge 2p+1$, every $n$-vertex
non-$p$-partite $K_{p+1}$-free graph has at most
$e(T_{n,p})-\lfloor n/p\rfloor+1$ edges.

Motivated by this threshold, we investigate a sharp clique-counting problem
below the Tur\'an threshold under the non-$p$-partite assumption. Let $p\ge 2$
and $s\ge 1$ be fixed integers. Let $Y_{n,p,s}$ be the graph obtained from
$T_{n,p}$ by adding an edge inside a largest part and deleting all but $s$ of
the edges from one endpoint of this new edge to a smallest part. Then
$e(Y_{n,p,s})=e(T_{n,p})-\lfloor n/p\rfloor+s+1$. We prove that, for all
sufficiently large $n$, every $n$-vertex non-$p$-partite graph $G$ with
$e(G)\ge e(Y_{n,p,s})$ contains at least as many copies of $K_{p+1}$ as
$Y_{n,p,s}$ does. The bound is sharp, as it is attained by the construction
$Y_{n,p,s}$. Thus our result provides the exact clique-counting analogue of
Brouwer's threshold for non-$p$-partite $K_{p+1}$-free graphs.

\begin{flushleft}
\textbf{Keywords:} Supersaturation; Chromatic number; Cliques
\end{flushleft}
\noindent
\textbf{AMS Classification:} 05C35

\section{Introduction}
All graphs considered in this paper are finite and simple.
For a graph $G$, we denote by $|V(G)|$ (or simply $|G|$) and $e(G)$ its number of vertices and edges, respectively.
For an integer $p\ge2$, let $\#K_{p+1}(G)$ denote the number of copies of $K_{p+1}$ in $G$.
For positive integers $n$ and $p$,
the Tur\'an graph $T_{n,p}$ is the complete $p$-partite graph on $n$ vertices with the maximum number of edges.
The \emph{Tur\'{a}n number} $\operatorname{ex}(n,H)$ is the maximum number of edges in an $n$-vertex $H$-free graph.
Tur\'{a}n's theorem \cite{Turan} states that $\operatorname{ex}(n,K_{p+1})=e(T_{n,p})$,
with $T_{n,p}$ being the unique extremal graph;
the case $p=2$ corresponds to Mantel's theorem~\cite{Mantel1907}.
Equivalently, every $n$-vertex graph $G$ with $e(G)>e(T_{n,p})$
must contain at least one copy of $K_{p+1}$.

A natural quantitative refinement of Tur\'{a}n's theorem is to determine the minimum possible number of copies of $K_{p+1}$ under prescribed edge or structural conditions.
This belongs to the classical Erd\H{o}s--Rademacher-type supersaturation problem.
More generally, for a graph $F$,
let $h_F(n,q)$ denote the minimum number of copies of $F$ in an $n$-vertex graph with $\operatorname{ex}(n,F)+q$ edges.
Another closely related quantity is $t_F(n,q)$,
defined as the minimum number of copies of $F$ obtained by adding $q$ edges to an $n$-vertex extremal $F$-free graph.
A central question in this area is to determine when $h_F(n,q)=t_F(n,q)$.
For cliques, this line of research goes back to Rademacher's theorem,
which states that every $n$-vertex graph with $e(T_{n,2})+1$ edges contains at least $\lfloor \frac{n}{2}\rfloor$ triangles (see \cite{Erdos19627}).
Erd\H{o}s \cite{Erdos19626,Erdos19627} later extended this result by proving that $h_{K_3}(n,q)=t_{K_3}(n,q)$ for a linear range $q<cn$.
Lov\'{a}sz and Simonovits \cite{LM1975,LM1983} developed the corresponding theory for general cliques,
proving that $h_{K_r}(n,q)=t_{K_r}(n,q)$ when $q=o(n^2)$.
%studying the equality $h_{K_r}(n,q)=t_{K_r}(n,q)$ and solving the clique supersaturation problem in the range $q=o(n^2)$.
Further developments on the minimum number of cliques in graphs with given order and size can be found in \cite{Fisher1989,LO,Nikiforov2011,PA2017}.
Notably, Reiher \cite{Reiher2016} proved the clique density theorem,
determining the asymptotic minimum density of $(p+1)$-cliques in graphs with a fixed edge density.
These results show that the extremal problem
$$
\min\{\#K_{p+1}(G): |V(G)|=n,\ e(G)=m\}
$$
is a central quantitative counterpart of Tur\'{a}n-type extremal theory.

Another important direction concerns color-critical graphs.
Recall that a graph $F$ is color-critical with chromatic number $p+1$ if $\chi(F)=p+1$ and there exists an edge $e\in E(F)$ such that $\chi(F-e)=p$.
Color-critical graphs play a fundamental role in extremal graph theory.
Simonovits \cite{Simonovits1966} proved that, for every color-critical graph $F$ with $\chi(F)=p+1$,
the Tur\'{a}n graph $T_{n,p}$ is the unique extremal $F$-free graph for all sufficiently large $n$.
Denote by $c(n,F)$ the minimum number of copies of $F$ in the graph obtained from $T_{n,p}$ by adding one edge.
Mubayi \cite{Mubayi2010} obtained the following supersaturation result for color-critical graphs.

\begin{thm}[Mubayi \cite{Mubayi2010}]\label{thm-Mubayi}
Let $p\geq 2$ and $F$ be a color-critical graph
with $\chi (F)=p+1$. There exists $\delta_F>0$ such that
if $n$ is sufficiently large, $1\le q < \delta_F n$, and
$G$ is an $n$-vertex graph with
\begin{equation*} \label{eq-Mub}
e(G)\ge e(T_{n,p}) +q,
\end{equation*}
then $G$ contains at least $q\cdot c(n,F) $ copies of $F$.
\end{thm}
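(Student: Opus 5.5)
The plan is the standard stability-plus-cleaning route to proving Theorem~\ref{thm-Mubayi}. We argue by contradiction. Suppose $G$ has $e(G)\ge e(T_{n,p})+q$ and fewer than $q\cdot c(n,F)$ copies of $F$. We may first delete edges to assume $e(G)=e(T_{n,p})+q$ exactly; deleting edges never creates copies of $F$. Note that $c(n,F)=\Theta(n^{|F|-2})$, so $G$ has only $O(\delta_F n^{|F|-1})=o(n^{|F|})$ copies of $F$. By the graph removal lemma, $G$ can be made $F$-free by deleting $o(n^2)$ edges. The Erd\H{o}s--Simonovits stability theorem (with $\chi(F)=p+1$) then says that $G$ is $o(n^2)$-close to $T_{n,p}$. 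Fix a max-cut partition $V_1,\dots,V_p$ of $G$. Then $|V_i|=n/p+o(n)$, and the number of non-edges across parts plus edges inside parts is $o(n^2)$.

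\textbf{Cleaning.} Let $B$ be the set of vertices $v$ with $d_{V_i}(v)\ge \varepsilon n$ for their own part $V_i$, or with at least $\varepsilon n$ non-neighbours in some other part. Then $|B|=o(n)$. Each $v\in B$ lies in many copies of $F$, on the order of $n^{|F|-1}$. For a vertex with large inner degree, max-cut gives it large degree into every part, which lets us embed $F$ using $v$ as a colour-critical vertex inside the nearly complete $p$-partite structure. If $|B|\ge\eta\, q$ for a suitable $\eta$, this already produces $\gg q\, c(n,F)$ copies, since each bad vertex contributes about $n^{|F|-1}\gg c(n,F)$ and double counting costs only a constant factor. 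So $|B|$ is small relative to $q\le\delta_F n$, and in particular $|B|\le \delta' n$.

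\textbf{Counting among good vertices.} Let $m$ be the number of edges inside parts (``bad edges'') and $r$ the number of missing cross edges. From $e(G)=e(T_{n,p})+q$ and $e(K_{|V_1|,\dots,|V_p|})\le e(T_{n,p})$ we get $m\ge q+r$. Each bad edge $uv$ inside $V_i$ with $u,v$ good forms a copy of $F$ together with common neighbourhoods in the other parts. These common neighbourhoods have size at least $|V_j|-2\varepsilon n$, giving about $c(n,F)(1-O(\varepsilon))$ copies per edge. The main obstacle is twofold:
\begin{enumerate}
\item obtaining exactly $q\cdot c(n,F)$ rather than $(1-\varepsilon)$ times it;
\item handling the missing cross edges, which destroy copies.
\end{enumerate}
For this we must compare precisely with $c(n,F)$, which is realized by adding an edge to a largest part of $T_{n,p}$.

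To handle these, I would count copies per bad edge more carefully. A copy through $uv$ is destroyed only by a missing cross edge incident to its vertices. Each missing edge kills only $O(n^{|F|-3})$ copies containing a given bad edge, whereas the surplus $m-q\ge r$ bad edges each contribute about $c(n,F)=\Theta(n^{|F|-2})$. Hence the surplus copies pay for the deficiency. Copies using two bad edges are counted at most boundedly often, which is a lower-order correction handled by inclusion--exclusion, since $m\le q+r+o(\cdot)$ and $q\le\delta_F n$. Part sizes deviating from balanced only make the count per bad edge larger relative to $c(n,F)$, up to the same $O(n^{|F|-3})$ trade-off. Finally, bad vertices with moderate degree anomalies are absorbed by the first cleaning step with constants chosen so that $\delta_F\ll\eta\ll\varepsilon$. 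This yields at least $q\cdot c(n,F)$ copies, a contradiction.
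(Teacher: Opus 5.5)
The paper contains no proof of this statement: it is quoted from Mubayi and used only as a black box (in Claim~\ref{CLAIM3.3}). Your proposal therefore has to stand on its own. Its architecture (removal lemma, stability, a max-cut partition, and $m\ge q+r$) is the right one. \textbf{The gap is in the cleaning step:} the claim that every $v\in B$ lies in order $n^{|F|-1}$ copies is unjustified for one kind of vertex in $B$, and as a local statement it is false.

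You put two different kinds of atypical vertices into $B$.
\begin{itemize}
\item For $v\in V_i$ with $d_{V_i}(v)\ge\varepsilon n$, max-cut does give $d_{V_j}(v)\ge\varepsilon n$ for every $j$, hence $\Omega(n^{|F|-1})$ copies of $F$ through $v$. A single such vertex already exceeds $q\,c(n,F)<\delta_F n\cdot O(n^{|F|-2})$. So under your contradiction hypothesis these vertices cannot exist once $\delta_F$ is small compared with $\varepsilon$.
\item For a vertex with at least $\varepsilon n$ non-neighbours in another part you give no argument, and no argument local to $v$ can work. In $T_{n,p}$, delete the edges from $v\in V_1$ to $\lceil\varepsilon n\rceil$ vertices of $V_2$, and add $q+\lceil\varepsilon n\rceil$ edges inside $V_1\setminus\{v\}$. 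Every copy of $F$ must use an added edge, so $v$ lies in only $O(n^{|F|-2})$ copies.
\end{itemize}
Such a vertex is harmless only for a global reason: it lowers $e(G)$ by about $\varepsilon n$, which forces about $\varepsilon n$ extra edges inside parts elsewhere. That accounting is exactly what is missing. So the bound on $|B|$ does not follow, and the final count over bad edges with both ends good is unjustified. It discards the bad edges incident to $B$, although they are included in $m\ge q+r$, and it does not control the copies destroyed by the missing edges at $B$.

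\textbf{A repair} uses two thresholds $\varepsilon'\ll\varepsilon$. Exclude vertices with $d_{V_i}(v)\ge\varepsilon' n$ as above. Then any vertex with at least $\varepsilon n$ missing cross edges has degree at most $(1-\frac1p-\frac{\varepsilon}{2})n$. Such vertices can be handled in either of two ways:
\begin{itemize}
\item by the standard deletion step: each deletion raises the excess over $e(T_{n',p})$ by $\Omega(\varepsilon n)$, while $c(n-k,F)\ge(1-O(k/n))\,c(n,F)$, so the larger excess more than pays;
\item by charging: each such vertex contributes at least $(\varepsilon-o(1))n$ to $r$ but fewer than $\varepsilon' n$ to $m$.
\end{itemize}

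\textbf{The exact counting also needs three corrections.}
\begin{itemize}
\item The loss estimate closes only if you separate two kinds of missing edges. Those meeting the bad edge each kill $O(n^{|F|-3})$ copies, but a missing edge $xy$ meets fewer than $2\varepsilon' n$ bad edges. Those not meeting it each kill $O(n^{|F|-4})$, and $m=o(n^2)$. Charging $O(n^{|F|-3})$ to every pair instead gives a total of $m\,r\,O(n^{|F|-3})$, which $r\,c(n,F)$ covers only if $m=O(n)$.
\item Unbalanced parts do not only increase the count. A bad edge inside a part larger than $\lceil n/p\rceil$ lies in fewer than $c(n,F)$ copies. This loss must be paid for by the deficit between $e(T_{n,p})$ and the number of edges of the complete $p$-partite graph on $V_1,\dots,V_p$. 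That deficit equals $m-q-r$ and is quadratic in the deviation.
\item Drop inclusion--exclusion. The overlap from pairs of bad edges sharing a vertex can be of order $\varepsilon' m\,c(n,F)$, which is not negligible when $m$ is close to $q$. Instead, count only copies in which the bad edge plays the critical edge and all other edges are cross edges. Each such copy contains exactly one bad edge, so these counts add without overlap.
\end{itemize}
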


Pikhurko and Yilma \cite{PY2017} strengthened Mubayi's result by proving that,
for every color-critical graph $F$, if $q$ is sufficiently small compared to $n$,
then $h_F(n,q)=t_F(n,q)$,
thereby yielding the exact minimum rather than just a lower bound.
More recently, Ma and Yuan \cite{MaYuan2025} studied supersaturation beyond color-critical graphs and showed that,
for general non-bipartite graphs, the relation between $h_F(n,q)$, $t_F(n,q)$, and $q\cdot c(n,F)$ can be considerably more subtle.

The results above concern graphs whose number of edges exceeds the Tur\'{a}n number.
In this paper, we consider a different  regime.
We study graphs whose number of edges may lie below $e(T_{n,p})$ by a linear term,
but which are required to be non-$p$-partite.
The chromatic condition is essential in this range.
In fact, a $p$-partite graph contains no copy of $K_{p+1}$, even if its number of edges is very close to $e(T_{n,p})$.
Hence an edge condition alone cannot force the existence of $K_{p+1}$ below the Tur\'{a}n threshold.

This perspective is closely related to strong Tur\'an stability.
The classical Erd\H{o}s--Simonovits stability theorem~\cite{Erdos-1967,Erdos-1968,Simonovits1966} asserts that every $K_{p+1}$-free graph
with at least $e(T_{n,p})-o(n^2)$ edges is structurally close to $T_{n,p}$.
A more refined question asks for the maximum number of edges in a
$K_{p+1}$-free graph that is not $p$-partite.
The case $p=2$, namely the corresponding result for non-bipartite triangle-free graphs, was proved
independently by Erd\H{o}s and Gallai, and by Andr\'asfai~\cite{Erdos19626}.
In 1981, Brouwer~\cite{Brouwer1981} refined Tur\'{a}n's theorem in the following form.

\begin{thm}[Brouwer~\cite{Brouwer1981}]\label{Brouwer}
For any integer $n \ge 2p+1$, let $G$ be an $n$-vertex graph that is not $p$-partite. If $G$ contains no copy of $K_{p+1}$, then
$e(G) \le e(T_{n,p}) - \lfloor \frac{n}{p}\rfloor + 1.$
\end{thm}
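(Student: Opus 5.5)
We prove Brouwer's bound by induction on $n$, following Brouwer's original route: delete a clique of minimum-degree-type vertices and reduce to a smaller non-$p$-partite $K_{p+1}$-free graph. Write $f(n,p)=e(T_{n,p})-\lfloor n/p\rfloor+1$. The key identity is
$$
e(T_{n,p})-e(T_{n-p,p})=\binom{p}{2}+(p-1)(n-p),
$$
since removing one vertex from each part of $T_{n,p}$ gives $T_{n-p,p}$. Because $\lfloor n/p\rfloor=\lfloor (n-p)/p\rfloor+1$, it follows that $f(n,p)-f(n-p,p)=\binom p2+(p-1)(n-p)-1$.

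\textbf{Base cases and an easy reduction.} Suppose first that $G$ contains no $K_p$. Then the standard bound gives $e(G)\le e(T_{n,p-1})$, and for $n\ge 2p+1$ one checks $e(T_{n,p-1})\le f(n,p)$. The difference $e(T_{n,p})-e(T_{n,p-1})$ is of order $n^2/(p(p-1))$, which dominates $n/p$. The small values of $n$ at the threshold must be checked directly; the case $p=2$ with no $K_2$ is trivial. So we may assume that $G$ contains a copy $Q$ of $K_p$.

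\textbf{Inductive step.} Since $G$ is $K_{p+1}$-free, every vertex outside $Q$ has at most $p-1$ neighbours in $Q$. Hence
$$
e(G)\le \binom p2+(p-1)(n-p)+e(G-Q).
$$
If $G-Q$ is not $p$-partite and $n-p\ge 2p+1$, the induction hypothesis gives $e(G-Q)\le f(n-p,p)$, and so $e(G)\le f(n,p)+1$. This is off by one, so we must either sharpen the count or choose $Q$ more carefully.

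\textbf{Main obstacle.} The difficulty lies in choosing $Q$ and handling the case where $G-Q$ is $p$-partite. I would choose $Q$ inside the non-$p$-partite structure, for instance a $K_p$ that meets a minimal non-$p$-colourable subgraph, or one that contains an endpoint of an edge violating a near-optimal $p$-colouring.

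First suppose $G-Q$ is $p$-partite with parts $V_1,\dots,V_p$. Each outside vertex $x$ misses at least one vertex of $Q$. Because $G$ is not $p$-partite, the vertices of $Q$ cannot all be placed into the classes consistently. This forces either an extra missing edge between $Q$ and $G-Q$, or missing edges inside $G-Q$ relative to the complete $p$-partite graph. I would count the missing edges $e(T_{n,p})-e(G)$ directly in this case and aim to show at least $\lfloor n/p\rfloor-1$ of them, mirroring the extremal example. That example is $T_{n,p}$ with an edge added inside a part and the vertex's edges to a smallest part deleted.

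Second suppose $G-Q$ is not $p$-partite. To recover the missing $1$, I would argue that equality in every outside-vertex count ($p-1$ neighbours in $Q$) together with equality in the induction forces $G$ to be $p$-partite or to contain a $K_{p+1}$. If $G-Q$ is extremal, then all outside vertices see exactly $p-1$ vertices of $Q$, and one can extend a colouring; otherwise $e(G-Q)\le f(n-p,p)-1$.

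\textbf{Base of the induction.} Base values $n\in[2p+1,3p]$ must be verified by a direct missing-edge count, using the same argument as the $p$-partite case above. I expect this accounting to be the bulk of the work.
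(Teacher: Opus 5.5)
The paper gives no proof of this statement. It is quoted from Brouwer and used as a black box, so your proposal has to stand on its own, and as written it does not. The pieces you carry out are fine: the identity $e(T_{n,p})-e(T_{n-p,p})=\binom p2+(p-1)(n-p)$, the reduction to $G\supseteq K_p$, and the off-by-one when $G-Q$ is not $p$-partite. That last step actually closes unconditionally. Your version (``if $G-Q$ is extremal, then all outside vertices see exactly $p-1$ vertices of $Q$'') is garbled, since extremality of $G-Q$ says nothing about the outside vertices, and it is not needed. The correct argument: suppose every $x\notin Q$ had exactly one non-neighbour $q_{i(x)}\in Q$. Then each set $\{q_i\}\cup\{x:i(x)=i\}$ would be independent, because two adjacent such $x,y$ together with $Q\setminus\{q_i\}$ would span a $K_{p+1}$. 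So $G$ would be $p$-partite. Hence for \emph{every} copy $Q$ of $K_p$ we have $e(Q,V(G)\setminus Q)\le (p-1)(n-p)-1$. The induction step therefore works whenever $n\ge 3p+1$ and $G-Q$ is not $p$-partite.

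The genuine gap is everything else: the case where $G-Q$ is $p$-partite, and the base range $2p+1\le n\le 3p$, where $n-p<2p+1$ and no induction hypothesis is available. For both you only state an intention (``count the missing edges directly and aim for $\lfloor n/p\rfloor-1$''), but that count is the entire content of Brouwer's theorem. In the $p$-partite case the observation above gives only $e(G)\le e(T_{n-p,p})+\binom p2+(p-1)(n-p)-1=e(T_{n,p})-1$, so you still owe $\lfloor n/p\rfloor-2$ further missing edges. Nor can the case be dodged by choosing $Q$ well. For $p=2$ and $n$ odd, take $G=C_n$. For $p\ge3$, take the join of an odd cycle $C_m$ ($m\ge5$) with $T_{n-m,p-2}$. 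In both, every copy of $K_p$ contains an edge of the cycle, so $G-Q$ is $p$-partite for every choice of $Q$. Moreover, your suggestion to take $Q$ meeting a minimal non-$p$-colourable subgraph pushes you straight into this case, since deleting any vertex of a critical subgraph destroys it. The induction, by contrast, needs $Q$ to avoid the obstruction.

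What is missing is a concrete structural count. One possible start: let $S$ be the set of outside vertices missing at least two vertices of $Q$, and let $A_j$ be the set of outside vertices whose only non-neighbour in $Q$ is $q_j$. Then $S\neq\varnothing$, and $G-S$ is $p$-partite with classes $\{q_j\}\cup A_j$. If $x\in S$ misses $q_a,q_b$ and sees the rest of $Q$, then $K_{p+1}$-freeness forbids every edge between $N(x)\cap A_a$ and $N(x)\cap A_b$. You would then have to convert such constraints, over all of $S$ and together with the imbalance of the class sizes, into at least $\lfloor n/p\rfloor-1$ missing edges. The same argument must also cover $2p+1\le n\le 3p$. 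Until that argument, or another doing the same job, is supplied, the proof is incomplete.
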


Theorem~\ref{Brouwer} has also been independently investigated in various contexts; see, for instance, \cite{Amin2013, Kang2005, Tyomkyn2015}.
The bound in Theorem~\ref{Brouwer} is sharp. Its sharpness follows from the
split construction in \cite[Definition~1.3]{LP2023}: one adds an edge inside
one part of a Tur\'an graph and splits another part into two nonempty sets,
deleting the edges from the two endpoints of the added edge to these two sets,
respectively. The resulting graph is non-$p$-partite and $K_{p+1}$-free, with
exactly $e(T_{n,p})-\lfloor n/p\rfloor+1$ edges.

Thus, under the non-$p$-partite assumption, every graph with more than $e(T_{n,p})-\lfloor n/p\rfloor+1$ edges must contain a copy of $K_{p+1}$.
As a quantitative extension, we aim to establish the corresponding exact counting result above this threshold.
Specifically, for a fixed positive integer $s$ and a fixed integer $p\ge 2$,
we determine a sharp lower bound for $\#K_{p+1}(G)$ among all non-$p$-partite graphs $G$ with $e(G)\ge e(T_{n,p})-\lfloor n/p\rfloor+s+1$.
To show the sharpness of this bound, we first describe an extremal construction before formally stating the
result.

\begin{definition}\label{DEF1.1}
Let $p, n$ and $s$ be integers such that $n \ge 2p \ge 4$ and $1\leq s \le \lfloor n/p \rfloor$.
Let $T_1, \dots, T_p$ denote the parts of the Tur\'an graph $T_{n,p}$ ordered such that $|T_1| \ge |T_2| \ge \dots \ge |T_p|$.
Choose two distinct vertices $u^*,v_1\in T_1$. Let $X_1\subseteq T_p$ with $|X_1|=s$,
and set $X_2=T_p\setminus X_1$.
We define $Y_{n,p,s}$ to be the graph obtained from $T_{n,p}$ by adding the edge $u^*v_1$ and then deleting all edges between $u^*$ and $X_2$; see Figure \ref{fig-1.1} for the
case $p=3$.
\end{definition}

\begin{figure}[!htbp]
\centering
\begin{tikzpicture}[scale=0.75, x=1.00mm, y=1.00mm, inner xsep=0pt, inner ysep=0pt, outer xsep=0pt, outer ysep=0pt]
\definecolor{L}{rgb}{0,0,0}
\definecolor{F}{rgb}{0,0,0}

\node[draw,dashed, line width=0.3mm, minimum width=45mm, minimum height=8mm,
label={[label distance=1mm]90:$T_1$}
] (rect1) at (10,0) {};
% 缁樺埗涓€涓暱鏂瑰舰
%\draw(8,8) node[anchor=base west]{\fontsize{12.23}{17.07}\selectfont $V_1$};

\node[draw, line width=0.3mm, minimum width=20mm, minimum height=5mm] (rect11) at (-5,0) {};
% 缁樺埗涓€涓暱鏂瑰舰

\draw(-16.3,-1.5) node[anchor=base west]{\fontsize{10.23}{17.07}\selectfont $T_1\setminus\{u^*,v_1\}$};

\node[circle,fill=F,draw,inner sep=0pt,minimum size=2mm] (u1) at (20.00,00.00) {};
\draw(19,2) node[anchor=base west]{\fontsize{10.23}{17.07}\selectfont $u^*$};

\node[circle,fill=F,draw,inner sep=0pt,minimum size=2mm] (u2) at (30.00,00.00) {};
\draw(29,2) node[anchor=base west]{\fontsize{10.23}{17.07}\selectfont $v_1$};

\node[draw,dashed, line width=0.3mm, minimum width=45mm, minimum height=9mm] (rect2) at (10,-30) {};
% 缁樺埗涓€涓暱鏂瑰舰
\draw(8,-42) node[anchor=base west]{\fontsize{12.23}{17.07}\selectfont $T_3$};

\node[draw, line width=0.3mm, minimum width=20mm, minimum height=5mm] (X2) at (-5,-30) {$X_2$};
% 缁樺埗涓€涓暱鏂瑰舰

\node[draw, line width=0.3mm, minimum width=20mm, minimum height=5mm] (X1) at (25,-30) {$X_1$};
% 缁樺埗涓€涓暱鏂瑰舰

%\node[draw, line width=0.3mm, minimum width=10mm, minimum height=5mm] (X3) at (30,-30) {$X_3$}; % 缁樺埗涓€涓暱鏂瑰舰

\node[draw, line width=0.3mm, minimum width=20mm, minimum height=7mm,
] (T2) at (47,-15) {$T_2$}; % 缁樺埗涓€涓暱鏂瑰舰

\definecolor{L}{rgb}{0,0,0}
\path[line width=0.45mm, draw=blue] (u1) -- (u2);
\path[line width=0.45mm, draw=red] (u1) -- (X1);
\path[line width=0.45mm, draw=L] (u2) -- (X2);
\path[line width=0.45mm, draw=L] (u2) -- (X1);
\path[line width=0.45mm, draw=L] (rect11) -- (X1);
\path[line width=0.45mm, draw=L] (rect11) -- (X2);

\path[line width=0.45mm, draw=L] (u1) -- (T2);
\path[line width=0.45mm, draw=L] (u2) -- (T2);
\path[line width=0.45mm, draw=L] (rect11) -- (T2);
\path[line width=0.45mm, draw=L] (X1) -- (T2);
\path[line width=0.45mm, draw=L] (X2) -- (T2);

\end{tikzpicture}
\caption{The graph $Y_{n,p,s}$ for $p=3$.}{\label{fig-1.1}}
\end{figure}

For any $s\in\big\{1,\ldots,\big\lfloor\frac {n}{p}\big\rfloor\big\}$,
we have
\begin{equation}\label{ALIGN-1}
e(Y_{n,p,s})=e(T_{n,p})-\Big\lfloor\frac{n}{p}\Big\rfloor+s+1.
\end{equation}
It is clear that the number of copies of $K_{p+1}$ in $Y_{n,p,s}$ is
\begin{align}\label{ALIGN-2}
\#K_{p+1}(Y_{n,p,s})=
\begin{cases}
 \begin{array}{ll}
s  & \hbox{if $p=2$,} \\
s\prod_{i=2}^{p-1}|T_i|=s\big(\frac{n}{p}\big)^{p-2}+O(n^{p-3})  & \hbox{if $p\geq 3$.}
\end{array}
\end{cases}
\end{align}
Since $s\ge 1$, the graph $Y_{n,p,s}$ contains a copy of $K_{p+1}$ and hence is non-$p$-partite.
Thus $Y_{n,p,s}$ gives a natural extremal construction for the clique-counting problem above Brouwer's threshold.
We now state our main theorem.

\begin{thm}\label{thm1.1}
Let $p\geq 2$ and $s\geq 1$ be fixed integers, and let $n$ be sufficiently large.
If $G$ is an $n$-vertex non-$p$-partite graph with
$e(G)\geq e(Y_{n,p,s}),$
then $$\#K_{p+1}(G)\geq \#K_{p+1}(Y_{n,p,s}).$$
Moreover, the bound is sharp and is attained by the construction $Y_{n,p,s}$.
\end{thm}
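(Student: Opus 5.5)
The plan is a stability argument followed by exact local optimization. Fix $p$ and $s$, and let $G$ be a non-$p$-partite graph with $e(G)\ge e(T_{n,p})-\lfloor n/p\rfloor+s+1$. We may assume $\#K_{p+1}(G)< s\prod_{i=2}^{p-1}|T_i|=O(n^{p-2})$, and aim for a contradiction.

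First, by the graph removal lemma (or the Erd\H{o}s--Simonovits stability theorem combined with the fact that $o(n^{p+1})$ copies of $K_{p+1}$ can be destroyed by deleting $o(n^2)$ edges), $G$ differs from $T_{n,p}$ by $o(n^2)$ edges. Take a max-$p$-cut $V_1\cup\dots\cup V_p$ of $G$. Then $|V_i|=n/p+o(n)$, and the number of \emph{bad} edges (inside parts) and \emph{missing} cross edges is $o(n^2)$.

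Next comes the standard cleaning. Call a vertex \emph{atypical} if it has at least $\varepsilon n$ non-neighbours in some other part. A vertex with $\Omega(n)$ neighbours in its own part $V_i$ has, by maximality of the cut, $\Omega(n)$ neighbours in every part. Such a vertex, together with the almost-complete structure, lies in $\Omega(n^{p-1})$ cliques, which is a contradiction. Similarly, each bad edge $uv$ with $u,v$ typical lies in $\Omega(n^{p-1})$ copies of $K_{p+1}$, contradiction. Hence every bad edge is incident to an atypical vertex. A counting argument shows that the set $W$ of atypical vertices has bounded size: each atypical vertex costs $\ge\varepsilon n$ missing edges, while the edge deficit below $e(T_{n,p})$ is only about $n/p$, plus the bad edges, and bad edges incident to $W$ create many cliques unless they are few. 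Then reduce to the case where the only bad edges are a bounded number, each touching $W$.

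The main step, which I expect to be the obstacle, is the exact count. Since $G$ is non-$p$-partite, there is at least one bad edge, say $u^*v_1\subseteq V_1$. For each part $V_j$ ($j\ne1$), let $a_j=|N(u^*)\cap N(v_1)\cap V_j|$. The number of $K_{p+1}$ containing $u^*v_1$ is roughly $\prod_{j\ge2}a_j$, up to missing edges among typical vertices. If every $a_j\ge 1$ and at least two are nonzero in a strong sense, the product dominates. The key local inequality is therefore as follows: to keep the common neighbourhood of $u^*,v_1$ in some part $V_k$ of size $t$, one must delete at least about $|V_k|-t$ edges at $u^*$ or $v_1$ toward $V_k$. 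Writing $e(G)\le e(K_{|V_1|,\dots,|V_p|})+(\#\text{bad})-(\#\text{missing})$ and using $e(K_{|V_1|,\dots,|V_p|})\le e(T_{n,p})$, with the loss growing when parts are unbalanced, the edge condition forces $t\ge s$ in the smallest-part direction. It also forces the other parts' common neighbourhoods to be nearly full, at least $|V_j|-O(1)$. The count is then at least $s\prod_{j}(|V_j|-O(1))$ minimized over the choice of which part is depleted. This gives the minimum $s\prod_{i=2}^{p-1}|T_i|$ only after a careful balancing argument: the depleted part should be the smallest, $u^*v_1$ should lie in the largest part, and the other parts must be exactly Tur\'an-sized. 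Showing that imbalance or extra missing edges cost more cliques than they save (each missing cross edge reduces the count by only $O(n^{p-3})$, but is paid for in the edge budget) requires exact bookkeeping, including lower-order terms. The case of several bad edges, or bad edges in different parts, is handled by showing that each additional bad edge contributes its own $\Omega(n^{p-2})$ cliques, which exceeds the savings.

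For $p=2$ the same scheme works, with triangles through $u^*v_1$ counted as $|N(u^*)\cap N(v_1)\cap V_2|\ge s$. Sharpness is immediate from $Y_{n,p,s}$ by (\ref{ALIGN-1}) and (\ref{ALIGN-2}).
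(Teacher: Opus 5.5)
Your stability and cleaning steps follow the same lines as the paper's Claims 3.1--3.7. The gap is in the step you yourself flag as the obstacle, which is where the exact statement is decided, and as sketched it does not go through.

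First, your argument that the set $W$ of atypical vertices is bounded does not close. In your clique-counting step the own-part threshold $cn$ needs $c$ larger than roughly $p\varepsilon$. So an atypical vertex may carry more bad edges than the $\varepsilon n$ missing edges it forces. The inequality $\tfrac12\varepsilon n|W|\le N_{\mathrm{bad}}+n/p$ (with $N_{\mathrm{bad}}$ the number of bad edges) then gives nothing unless bad edges at $W$ are shown to create cliques.

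Second, bad edges need not lie in any $K_{p+1}$. In Brouwer's split construction (splitting a smallest part), the added edge $xy$ lies in no $K_{p+1}$, because $x$ and $y$ have disjoint neighbourhoods in the split part. This persists if you add $s$ further edges inside the split part to reach $e(Y_{n,p,s})$. So your principle that each additional bad edge contributes its own $\Omega(n^{p-2})$ cliques is false as stated. Likewise, non-$p$-partiteness alone does not give you a bad edge with all $a_j\ge 1$; you need Brouwer's theorem even to know that some $K_{p+1}$ exists.

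Third, a bound of the form $s\prod_j(|V_j|-O(1))$ falls short of $s\prod_{i=2}^{p-1}|T_i|$ by $\Theta(n^{p-3})$. An exact result therefore needs all these $O(1)$ losses to vanish, which in the paper amounts to showing that a suitably chosen extremal $G$ minus one vertex is exactly $T_{n-1,p}$. Your ``careful balancing argument'' and ``exact bookkeeping'' over arbitrary configurations of atypical vertices, bad edges and missing cross edges is precisely this, and you give no mechanism for it. Your $p=2$ case inherits the same gap.

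The paper makes the exact step tractable by arguing about an extremal graph rather than an arbitrary one. Take $G$ minimizing $\#K_{p+1}(G)$ and, subject to this, maximizing $e(G)$. Cleaning needs only a linear bound $\zeta n$ on the number of low-degree vertices, obtained by applying Mubayi's theorem after deleting $\lfloor\zeta n\rfloor$ of them. After cleaning, every edge inside a part has a low-degree endpoint, so the $K_{p+1}$ supplied by Brouwer's theorem contains a low-degree vertex $u^*$.

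If $G-u^*$ were still non-$p$-partite, rewiring $u^*$ to all good vertices of the other parts would increase $e(G)$ and destroy every clique through $u^*$, contradicting minimality. Hence $\chi(G-u^*)=p$.

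Simple edge switches then make $G-u^*$ complete $p$-partite: add a missing cross edge, or trade an edge at $u^*$ for it, using a third tie-break that minimizes $d_G(u^*)$. Moving one vertex between parts makes it balanced, so $G-u^*\cong T_{n-1,p}$. Now $\#K_{p+1}(G)=\prod_i k_i$ exactly, where $k_i=|N_G(u^*)\cap V_i|$ and $\sum_i k_i$ is pinned down by the edge condition. An elementary product minimization gives $s\prod_{i=1}^{p-2}n_i=\#K_{p+1}(Y_{n,p,s})$. This replaces all of your lower-order bookkeeping.

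For $p=2$ the paper does not use stability at all. It combines Brouwer's theorem with Erd\H{o}s's exact count of triangles below the Mantel threshold.
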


Observe that Theorem~\ref{thm-Mubayi} addresses the edge-excess regime where
$e(G)\ge e(T_{n,p})+q$ for $q>0$.
In contrast, our theorem focuses on the regime defined by the threshold
$e(Y_{n,p,s})$.
For fixed $s$, this threshold lies below the Tur\'an number by a linear term.
Within this regime, the chromatic assumption is essential, since
$p$-partite graphs contain no copy of $K_{p+1}$.
Thus, Theorem~\ref{thm1.1} complements the classical edge-excess
supersaturation results.

On the other hand, Theorem~\ref{Brouwer} establishes an existence threshold.
For sufficiently large $n$, this threshold is
$h(n,p)=e(T_{n,p})-\lfloor n/p\rfloor+1$.
Hence Theorem~\ref{Brouwer} implies that every non-$p$-partite graph with more
than $h(n,p)$ edges contains a copy of $K_{p+1}$.
Our Theorem~\ref{thm1.1} provides the corresponding sharp counting version:
since $e(Y_{n,p,s})=h(n,p)+s$, it yields a sharp lower bound on the number
of copies of $K_{p+1}$ in every non-$p$-partite graph with at least
$h(n,p)+s$ edges.
In particular, when $s=1$, this result strengthens the existence conclusion of
Theorem~\ref{Brouwer} to an exact clique-counting statement.

\section{Preliminaries}
In this section, we present some preliminary results that will be used in the
proof.

\begin{thm}\label{thm}\emph{(\cite{Erdos1964})}
Let $\ell\ge 0$. If an $n$-vertex graph $G$ with
$e(G)=\big\lfloor \frac{n^2}{4}\big\rfloor-\ell$ contains a triangle, then $G$
contains at least $\big\lfloor \frac{n}{2}\big\rfloor-\ell-1$ triangles.
\end{thm}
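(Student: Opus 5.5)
The plan is induction on $n$: peel off the vertex set of one triangle, and compare the triangles lost with the edges lost. To keep the induction closed, I will prove the slightly more flexible statement in which $\ell$ is allowed to be any integer with $\ell \ge -\lfloor n/2\rfloor+1$. For negative $\ell$ the required bound $\lfloor n/2\rfloor-\ell-1$ only asks for at least $\lfloor n/2\rfloor-\ell-1 \ge 1$ triangles, or else I cap the bound at $\max\{1,\cdot\}$ so that it remains true in that range. The base cases $n\le 5$ are checked directly: for instance, when $n=3$ the bound reads $1-\ell-1=-\ell\le 1$.

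\textbf{Counting around one triangle.} Fix a triangle $xyz$ in $G$ and put $D=d(x)+d(y)+d(z)$. Summing $|N(w)\cap\{x,y,z\}|$ over the $n-3$ outside vertices $w$ gives $D-6$. Each outside $w$ with at least two neighbours in $\{x,y,z\}$ forms a new triangle with an edge of $xyz$. Since each $w$ contributes at most $3$ to the sum, at least $D-6-(n-3)=D-n-3$ outside vertices have two or more such neighbours. Hence the triangles meeting $\{x,y,z\}$ in at least two vertices number at least
$$1+\max\{0,\,D-n-3\}.$$
Let $G'=G-\{x,y,z\}$. Then $G'$ has $n-3$ vertices and $e(G')=e(G)-D+3$ edges. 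Every triangle of $G'$ is disjoint from the triangles counted above, so
$$t(G)\ge t(G')+1+\max\{0,D-n-3\}.$$

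\textbf{Case 1: $G'$ is triangle-free.} Mantel's theorem gives $e(G')\le\lfloor (n-3)^2/4\rfloor$. Therefore $D\ge e(G)+3-\lfloor (n-3)^2/4\rfloor$. Using $\lfloor n^2/4\rfloor-\lfloor (n-3)^2/4\rfloor = \lfloor 3n/2\rfloor-2$, which I will check in both parities, this becomes $D\ge \lfloor 3n/2\rfloor+1-\ell$. Plugging in, $t(G)\ge 1+D-n-3\ge \lfloor n/2\rfloor-\ell-1$, as required.

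\textbf{Case 2: $G'$ contains a triangle.} Write $e(G')=\lfloor (n-3)^2/4\rfloor-\ell'$. The same floor identity gives
$$\ell'=\ell-\lfloor 3n/2\rfloor+2+D-3.$$
Applying induction to $G'$,
$$t(G)\ge \lfloor (n-3)/2\rfloor-\ell'-1+1+(D-n-3).$$
The $D$ terms cancel and the right-hand side simplifies to $\lfloor n/2\rfloor-\ell-1$ up to parity bookkeeping. If $D-n-3<0$, replacing it by $0$ only helps. If $D-n-3<0$ and also $\ell'$ falls outside the admissible range, I bound $t(G)$ by the Case 1 style estimate instead. This is exactly why the statement is extended to negative $\ell$.

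\textbf{Main obstacle.} The delicate step is the bookkeeping in Case 2. One must verify that $\ell'$ stays in the range where the inductive hypothesis applies. One must also verify that the floors $\lfloor (n-3)/2\rfloor$ versus $\lfloor n/2\rfloor$ and $\lfloor 3n/2\rfloor$ combine without losing a unit in either parity of $n$. If a unit is lost, I would first try choosing the triangle $xyz$ to maximise $D$, so that $D-n-3$ absorbs the discrepancy. Failing that, I would remove a single vertex of minimum degree instead of a whole triangle when $\delta(G)<n/2$. In that case the edge deficit drops by roughly the right amount while no triangles need to be counted.
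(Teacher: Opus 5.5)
The paper gives no proof of this statement. It is quoted from Erd\H{o}s and used only as a black box for $p=2$, so your argument has to stand on its own, and as written it has a genuine gap. What works: Case~1 is correct, and so is $t(G)\ge t(G')+1+\max\{0,D-n-3\}$. The intermediate claim is slightly off, though. If $a_j$ counts outside vertices with exactly $j$ neighbours on $xyz$, you only get $a_2+2a_3\ge D-n-3$, not $a_2+a_3\ge D-n-3$; the triangle count $a_2+3a_3$ is still at least $D-n-3$. Parity is also not the obstacle. Whenever the inductive hypothesis applies to $G'$, your Case~2 bound is $\lfloor n/2\rfloor+\lfloor (n-3)/2\rfloor-\ell-2$, which beats the target by $\lfloor (n-3)/2\rfloor-1\ge 0$.

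The gap is the case where $\ell'=\ell+D-\lfloor 3n/2\rfloor-1$ falls outside the range of your hypothesis: $\ell'<0$ for the original statement, or $\ell'<-\lfloor (n-3)/2\rfloor+1$ for your extension. Nothing excludes this. Since $D$ can be as small as $6$, a priori $e(G')$ exceeds $\lfloor (n-3)^2/4\rfloor$ by up to about $3n/2$. This happens precisely when $xyz$ has small degree sum, so the local term gives little or nothing (for $\ell\ge 0$ and $\ell'<-\lfloor (n-3)/2\rfloor+1$ one has $D\le n+3$), and the triangles must come from $G'$. Your ``Case~1 style estimate'' is not available here, because it came from Mantel's bound for a triangle-free $G'$.

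The extension to negative $\ell$ does not help. At $\ell=-1$ it already is Rademacher's theorem, so it is not a free strengthening. Capping at $\max\{1,\cdot\}$ changes nothing there, while a $\min$-cap would leave only $t(G')\ge 1$. Truncating $G'$ to the bottom of the admissible range gives only $t(G)\ge 2\lfloor (n-3)/2\rfloor-1$ when $D\le n+3$, which is below the required $2\lfloor n/2\rfloor-2$ at $\ell=-\lfloor n/2\rfloor+1$.

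The simple repair is to drop the extension and keep $\ell\ge 0$. When $e(G')\ge\lfloor (n-3)^2/4\rfloor+1$, apply Rademacher's theorem (quoted in the paper's introduction) to $G'$. This gives $t(G)\ge\lfloor (n-3)/2\rfloor+1=\lfloor (n-1)/2\rfloor\ge\lfloor n/2\rfloor-\ell-1$.

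Some such outside input is genuinely needed. Using only your own hypothesis, by truncating $G'$ to deficit $0$ while keeping a triangle, you get $t(G)\ge\lfloor (n-3)/2\rfloor+\max\{0,D-n-3\}$. That is one short exactly when $n$ is even, $\ell=0$ and $D\le n+3$. Neither of your fallbacks is carried far enough to settle that case, and deleting a minimum-degree vertex runs into the same above-threshold situation when $\delta(G)$ is small.
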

Theorem~\ref{thm} is a classical exact counting result below the Mantel
threshold  $e(T_{n,2})=\big\lfloor \frac{n^2}{4}\big\rfloor$, the maximum
number of edges in an $n$-vertex triangle-free graph. It has inspired several later developments: Moon~\cite{Moon1965}
extended this type of counting problem from triangles to complete subgraphs,
while more recent work of Li, Lu and Peng~\cite{LLP}, and of Li, Feng and
Peng~\cite{LFP}, developed spectral analogues and spectral supersaturation
results for triangles and related subgraphs.

We next recall two standard structural tools for graphs near the Tur\'an
threshold. The first is the Graph Removal Lemma, which turns a small number of
forbidden subgraphs into a nearby graph with no such subgraph.

\begin{lem}\label{LEM2.1}\emph{(Graph Removal Lemma \cite{Komlos1996})}
Suppose that an $n$-vertex graph $G$ has at most $o(n^{|F|})$ copies of $F$.
Then there exists a set of edges in $G$ of size $o(n^2)$ whose removal from
$G$ results in an $F$-free graph.
\end{lem}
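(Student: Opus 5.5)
The plan is to prove the Graph Removal Lemma in its quantitative form. For every $\varepsilon>0$ there is $\delta=\delta(\varepsilon,F)>0$ such that an $n$-vertex graph with fewer than $\delta n^{|F|}$ copies of $F$ can be made $F$-free by deleting at most $\varepsilon n^2$ edges. This is what the $o(\cdot)$ formulation in Lemma~\ref{LEM2.1} means. The two tools are Szemer\'edi's regularity lemma and the associated counting (embedding) lemma. Write $f=|F|$ and set $d=\varepsilon/4$. Choose $\eta>0$ small in terms of $d$ and $f$, so that the counting lemma applies with density threshold $d$ and regularity parameter $\eta$. We also need $\eta\le \varepsilon/8$, and a lower bound $k_0\ge 8/\varepsilon$ on the number of parts. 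Apply the regularity lemma with these parameters. It yields an equipartition $V_1,\dots,V_k$ with $k_0\le k\le M(\eta,k_0)$, in which all but $\eta k^2$ pairs $(V_i,V_j)$ are $\eta$-regular. (We may assume $n$ is large compared with $M$; small $n$ is absorbed by taking $\delta$ small.)

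Next we clean the graph, forming $G'$ by deleting three kinds of edges. First, the edges inside some $V_i$: at most $k\binom{n/k}{2}\le n^2/(2k)\le \varepsilon n^2/16$. Second, the edges between irregular pairs: at most $\eta k^2 (n/k)^2=\eta n^2\le\varepsilon n^2/8$. Third, the edges between pairs of density less than $d$: at most $\binom{k}{2}d(n/k)^2\le dn^2/2=\varepsilon n^2/8$. Altogether at most $\varepsilon n^2$ edges are removed. In $G'$, every remaining edge runs between two distinct parts $V_i,V_j$ that form an $\eta$-regular pair of density at least $d$.

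Suppose $G'$ still contains a copy of $F$, and let $\phi:V(F)\to[k]$ send each vertex to the index of its part. Every edge of $F$ joins distinct parts forming a regular dense pair, so $\phi$ is a homomorphism from $F$ into the reduced graph $R$ on $[k]$, whose edges are the $\eta$-regular pairs of density at least $d$. The main step is the counting lemma applied with the target sets $V_{\phi(x)}$, $x\in V(F)$; the same part may appear for several vertices of $F$. It gives at least $(d-\eta)^{e(F)}(1-f\eta)\,\prod_x |V_{\phi(x)}|\ge c\,(n/M)^{f}$ labelled homomorphic images, where $c=c(d,f)>0$.

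This count may include non-injective maps, which occur only when vertices of $F$ with the same image collide. There are at most $\binom f2 n^{f-1}$ such maps, which is negligible for large $n$. Each copy of $F$ is counted at most $|\mathrm{Aut}(F)|\le f!$ times. Hence $G'\subseteq G$ contains at least $c(n/M)^f/(2f!)$ copies of $F$. Setting $\delta=c/(2f!M^f)$ contradicts the hypothesis, so $G'$ is $F$-free.

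The main obstacle is the counting lemma with repeated parts, since a homomorphism into $R$ need not be injective. I would handle it by the standard greedy embedding. Embed the vertices of $F$ one at a time, maintaining for each unembedded vertex $x$ a candidate set inside $V_{\phi(x)}$ of size at least $(d-\eta)^{\deg}|V_{\phi(x)}|$. Regularity guarantees that all but $f\eta|V_i|$ choices shrink every candidate set by at most a factor $d-\eta$. At each step we also exclude the at most $f$ vertices already used, which ensures injectivity directly for large $n$.
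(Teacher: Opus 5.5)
Your proof is correct. It is the standard regularity-lemma argument: delete intra-part edges, edges in irregular pairs and edges in sparse pairs, then apply the counting/greedy embedding lemma to any surviving copy of $F$. This is essentially the proof in the Koml\'os--Simonovits survey that the paper cites; the paper itself gives no proof and uses the lemma as a black box.

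The remaining slips are cosmetic. The greedy embedding you describe gives a bound like $\bigl((d-\eta)^{f}-2f\eta\bigr)^{f}\prod_x|V_{\phi(x)}|$, not $(d-\eta)^{e(F)}(1-f\eta)\prod_x|V_{\phi(x)}|$. Also, parts have size about $n/k\ge n/(2M)$ rather than $n/M$. Neither matters, since you only use a lower bound $c(d,f)\,n^{f}$, and that holds once $\eta$ is small in terms of $d$ and $f$.
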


After applying Lemma \ref{LEM2.1}, one obtains an $F$-free graph after
deleting only few edges. When this graph still has nearly Tur\'an many edges,
the following classical stability theorem of Erd\H{o}s \cite{Erdos-1967,Erdos-1968} and Simonovits \cite{Simonovits1966} shows
that it must be close to the corresponding Tur\'an graph.

\begin{lem} \label{lem2.2}
\emph{(\cite{Erdos-1967,Erdos-1968,Simonovits1966})}
Let $H$ be a given graph with $\chi(H)=p+1$, and let $G$ be an $H$-free graph
on $n$ vertices. For any $\varepsilon>0$ and integer $p\geq 2$, there exist
a constant $\delta>0$ and an integer $n_0$ such that if $n\geq n_0$ and
$e(G)\geq (\frac{p-1}{p}-\delta)\frac{n^2}{2}$, then $G$ differs
from $T_{n,p}$ by at most $\varepsilon n^2$ edges.
\end{lem}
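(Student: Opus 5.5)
The "statement immediately above" is Lemma~\ref{lem2.2}, the Erd\H{o}s--Simonovits stability theorem. I will follow the standard route: first a removal step for a fixed $(p+1)$-chromatic $H$, then the classical stability for $K_{p+1}$-free graphs via a symmetrization/degree argument. First I reduce to $H=K_{p+1}$. Since $\chi(H)=p+1$, $H\subseteq K_{p+1}(t)$, the complete $(p+1)$-partite graph with parts of size $t=|H|$. By the supersaturation version of the Erd\H{o}s--Stone theorem, an $H$-free graph has only $o(n^{p+1})$ copies of $K_{p+1}$. Otherwise a positive density of $(p+1)$-cliques forces, by a standard hypergraph K\H{o}v\'ari--S\'os--Tur\'an argument, a copy of $K_{p+1}(t)\supseteq H$. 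Lemma~\ref{LEM2.1} then removes $o(n^2)$ edges to give a $K_{p+1}$-free graph $G'$ with $e(G')\ge(\frac{p-1}{p}-2\delta)\frac{n^2}{2}$ for large $n$. It therefore suffices to prove the statement for $K_{p+1}$-free graphs, with $\varepsilon/2$ in place of $\varepsilon$.

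For $K_{p+1}$-free $G'$, I will first delete vertices of low degree. Repeatedly remove any vertex of degree below $(\frac{p-1}{p}-\sqrt{\delta})n$. Each removal raises the edge surplus over the Tur\'an density of the remaining graph, so by Tur\'an's theorem only $O(\sqrt{\delta})n$ vertices are removed. This leaves $G''$ on $n''$ vertices with minimum degree at least $(\frac{p-1}{p}-\gamma)n''$, where $\gamma\to0$ as $\delta\to0$. Now I proceed by induction on $p$. Pick a vertex $v$ of maximum degree and let $N=N(v)$. Then $G''[N]$ is $K_p$-free, and a counting argument gives $e(G''[N])\ge(\frac{p-2}{p-1}-\gamma')\frac{|N|^2}{2}$ with $|N|\approx\frac{p-1}{p}n''$. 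This counting uses $\sum_{u\in N}d(u)\ge|N|\delta(G'')$ and the fact that each $u\in N$ has at most $n''-|N|$ neighbours outside $N$. The set $W=V\setminus N$ has size about $n''/p$ and contains few edges, since each edge inside $W$ costs degree. The induction hypothesis splits $N$ into $p-1$ near-Tur\'an parts, and $W$ becomes the $p$-th part. The base case $p=1$ is trivial, since then the graph is empty.

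Finally, I count the edge discrepancy. $G''$ differs from the complete $p$-partite graph on these parts by few edges inside parts plus missing cross edges. The missing cross edges number $o(n^2)$, because the total edge count nearly matches the Tur\'an number. Balancing the part sizes costs $O(\sqrt{\gamma})n^2$ edge changes. Adding back the removed vertices and the deleted edges contributes $O(\sqrt\delta)n^2+o(n^2)$. Choosing $\delta$ small in terms of $\varepsilon$ completes the proof. The main obstacle is the first step: controlling $\#K_{p+1}$ in an $H$-free graph requires the supersaturation/blow-up lemma, and one must check that it gives $o(n^{p+1})$ uniformly enough for the removal lemma to apply.
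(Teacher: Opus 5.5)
The paper does not prove Lemma~\ref{lem2.2}. It quotes it as the classical Erd\H{o}s--Simonovits stability theorem, so there is no internal argument to match, and your sketch has to stand on its own. It does: it is a correct version of the standard proof.

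The reduction to $H=K_{p+1}$ is sound. A positive density of $(p+1)$-cliques gives, by Erd\H{o}s's hypergraph K\H{o}v\'ari--S\'os--Tur\'an theorem, sets $A_1,\dots,A_{p+1}$ of size $t$ all of whose transversals span cliques. Every cross pair lies in some transversal, so $K_{p+1}(t)\supseteq H$. Hence Lemma~\ref{LEM2.1} applies.

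The clique case also closes. From $2e(N)\ge |N|(\delta(G'')-|W|)$ and $e(N)\le \frac{p-2}{p-1}\cdot\frac{|N|^2}{2}$ you get both the density bound for $G''[N]$ and $|N|\le(\frac{p-1}{p}+(p-1)\gamma)n''$. The bound $\sum_{w\in W}d(w)\le |W|\,|N|$ then gives $e(W)=O(\gamma)n''^2$ and $|N||W|-e(N,W)=O(\gamma)n''^2$.

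Two small repairs are needed.
\begin{itemize}
\item The missing cross edges number $O(\gamma+\varepsilon')n^2$, not $o(n^2)$. This is all you need.
\item In the deletion step the constants must be matched to the slack. Suppose you run it on $G'$, whose slack is $2\delta$, with the threshold fixed at $(\frac{p-1}{p}-\sqrt\delta)n$. Then the surplus count does not close for $p\ge 3$. Take a complete $p$-partite graph with one part of size slightly above $(\frac1p+\sqrt\delta)n$ and the others balanced. It has about $\frac{p-1}{2p}n^2-\frac{p}{2(p-1)}\delta n^2$ edges, so it meets the bound, yet the process deletes every vertex. Measuring the threshold against the current number of vertices, or using $2\sqrt\delta$, restores the bound of $O(\sqrt\delta)n$ deletions.
\end{itemize}

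As for what each approach buys: citing costs the paper nothing, while your proof is self-contained. For general $H$ it leans on the removal lemma, hence on regularity; this only affects $n_0$, not $\delta$. The original arguments of Erd\H{o}s and Simonovits predate the regularity lemma and avoid it. Also, the paper applies Lemma~\ref{lem2.2} only with $H=K_{p+1}$, to the graph $G'=G-E_0$ in Claim~\ref{CLAIM3.1}. It has already carried out the removal-lemma step itself there, so for the paper's purposes your clique argument alone suffices.
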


\section{Proof of Theorem \ref{thm1.1}}\label{section4}

In this section, we present the proof of Theorem \ref{thm1.1}.

\begin{proof}[\textbf{Proof of Theorem \ref{thm1.1}}]
Let $G$ be an $n$-vertex non-$p$-partite graph with
$e(G)\geq e(Y_{n,p,s}).$
If $G\cong Y_{n,p,s}$, then $e(G)=e(Y_{n,p,s})$ and $\#K_{p+1}(G)=\#K_{p+1}(Y_{n,p,s})$.
Thus, it is enough to prove that $\#K_{p+1}(G)\geq \#K_{p+1}(Y_{n,p,s}).$

We first consider the case $p=2$.
Let $G$ be an $n$-vertex non-bipartite graph satisfying
$e(G)\ge \lfloor \frac{(n-1)^2}{4}\rfloor+s+1.$
Because $G$ is non-bipartite, it contains an odd cycle.
Fix an odd cycle $C$ in $G$.
For sufficiently large \(n\),
$\lfloor \frac{(n-1)^2}{4}\rfloor+s+1\ge n\ge |E(C)|$.
Therefore, we can choose a spanning subgraph $H$ of $G$ such that
$C\subseteq H$ and
$e(H)=\lfloor \frac{(n-1)^2}{4}\rfloor+s+1$.
In particular, $H$ remains non-bipartite.
Since $s\ge 1$, we obtain
$e(H)=\lfloor \frac{(n-1)^2}{4}\rfloor+s+1
\geq e(T_{n,2})-\lfloor \frac{n}{2}\rfloor +2$.
Then by Theorem \ref{Brouwer}, \(H\) contains a triangle.
Let $\ell=e(T_{n,2})-e(H).$
Then
$
\ell
=e(T_{n,2})
-\Big\lfloor \frac{(n-1)^2}{4}\Big\rfloor-s-1
=
\Big\lfloor \frac n2\Big\rfloor-s-1>0.
$
Since $H$ contains a triangle and $\ell>0$, Theorem~\ref{thm} yields
$\#K_3(H)\ge
\lfloor \frac n2\rfloor-\ell-1
=s$.
Furthermore, using \eqref{ALIGN-2} we have
$$\#K_3(G)\ge \#K_3(H)\ge s=\#K_3(Y_{n,2,s}),$$
as desired.

It remains to consider the case $p\ge 3$.
Let $\varepsilon>0$ be a sufficiently small constant and $n$ be sufficiently large.
Among all $n$-vertex non-$p$-partite graphs $G$ satisfying
$e(G)\geq e(Y_{n,p,s})$,
we choose one that minimizes the number of copies of $K_{p+1}$. Subject to this choice, we further choose $G$ so that $e(G)$ is as large as possible.
In the following, we present the proof via a sequence of claims.

\begin{claim}\label{CLAIM3.1}
$G$ can be obtained from $T_{n,p}$ by adding and deleting at most $2\varepsilon n^2$ edges.
\end{claim}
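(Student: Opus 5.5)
We argue via the Removal Lemma followed by stability. First, minimality of $G$ gives a cheap upper bound on its clique count: $Y_{n,p,s}$ is itself an admissible competitor (non-$p$-partite with exactly $e(Y_{n,p,s})$ edges), so
$$\#K_{p+1}(G)\le \#K_{p+1}(Y_{n,p,s})\le s\,n^{p-2}=o(n^{p+1})$$
by \eqref{ALIGN-2}, since $s$ and $p$ are fixed. By the Graph Removal Lemma (Lemma~\ref{LEM2.1}) with $F=K_{p+1}$, we can delete a set of $o(n^2)$ edges from $G$ and obtain a $K_{p+1}$-free spanning subgraph $G'$. For large $n$ we may take this set to have size at most $\varepsilon n^2/2$.

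Next we bound the size of $G'$ from below. We have
$$e(G')\ge e(G)-\tfrac{\varepsilon}{2}n^2\ge e(T_{n,p})-\lfloor n/p\rfloor-\tfrac{\varepsilon}{2}n^2 .$$
Take $\delta=\delta(\varepsilon/2,p)$ from Lemma~\ref{lem2.2} applied with $H=K_{p+1}$, and assume $\varepsilon$ is small relative to $\delta$ (equivalently, first fix the stability parameter and then choose the removal error smaller than $\delta n^2/4$). Since $e(T_{n,p})\ge \frac{p-1}{p}\frac{n^2}{2}-p$, for large $n$ we get $e(G')\ge(\frac{p-1}{p}-\delta)\frac{n^2}{2}$. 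Lemma~\ref{lem2.2} then says that $G'$ differs from $T_{n,p}$ by at most $\frac{\varepsilon}{2}n^2$ edges, after a suitable labelling of the vertices.

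Finally we pass from $G'$ back to $G$. The graph $G$ differs from $G'$ by at most $\frac{\varepsilon}{2}n^2$ edges, so by the triangle inequality $G$ differs from $T_{n,p}$ by at most $\varepsilon n^2\le 2\varepsilon n^2$ edges. The slack in the constant $2\varepsilon$ absorbs any rounding in how the two error terms are chosen.

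The only delicate point is the order of quantifiers. The constant $\delta$ in the stability lemma depends on the target error, so we must choose $\varepsilon$ first, then $\delta$, then shrink the removal-lemma error below both $\varepsilon n^2/2$ and $\delta n^2/4$, and only then take $n$ large. Everything else is direct substitution.
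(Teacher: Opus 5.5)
Your proposal is correct and follows essentially the same route as the paper: you bound $\#K_{p+1}(G)$ by $\#K_{p+1}(Y_{n,p,s})=O(n^{p-2})$ via minimality, apply the Removal Lemma to get a $K_{p+1}$-free $G'$, apply Erd\H{o}s--Simonovits stability to $G'$, and add the removed edges back; the only difference is that the paper splits the error as $\varepsilon n^2+\varepsilon n^2$ rather than your $\frac{\varepsilon}{2}n^2+\frac{\varepsilon}{2}n^2$. One wording fix: ``assume $\varepsilon$ is small relative to $\delta$'' is circular because $\delta$ depends on $\varepsilon$, but your parenthetical and final paragraph give the correct quantifier order (make the removal error at most $\min\{\varepsilon/2,\delta/4\}n^2$, possible since it is $o(n^2)$), which is what the paper does implicitly.
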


\begin{proof}
By the choice of $G$ and \eqref{ALIGN-2}, we have
\begin{align}\label{ALIGN-3}
\#K_{p+1}(G) \leq \#K_{p+1}(Y_{n,p,s})\leq s\Big(\frac np\Big)^{p-2}+O(n^{p-3}).
\end{align}
Using Lemma~\ref{LEM2.1}, there exists a set $E_0\subseteq E(G)$ with
$|E_0|=o(n^2)$ such that $G'=G-E_0$ is $K_{p+1}$-free.
For the given $\varepsilon > 0$, let $\delta_0 >0$ be the constant guaranteed by Lemma~\ref{lem2.2}.
By \eqref{ALIGN-1}, we have
\begin{align}\label{EQU-2B}
e(G)\geq e(Y_{n,p,s})=e(T_{n,p})-\big\lfloor\frac np\big\rfloor+s+1
=\frac{p-1}{2p}n^2-O(n).
\end{align}
Furthermore, since $|E_0|=o(n^2)$, it follows that
$e(G')=e(G)-|E_0|\geq( \frac{p-1}{p}-\delta_0) \frac{n^2}{2}.$
Then by Lemma~\ref{lem2.2},
$G'$ differs from $T_{n,p}$ by at most $\varepsilon n^2$ edges.
Consequently, since $G$ is obtained from $G'$ by adding back the edges in $E_0$,
and noting that $|E_0| \le \varepsilon n^2$ for sufficiently large $n$,
the graph $G$ differs from $T_{n,p}$ by at most
$|E_0|+\varepsilon n^2 \le 2\varepsilon n^2$
edges.
This completes the proof of Claim~\ref{CLAIM3.1}.
\end{proof}

For any subset $S \subseteq V(G)$, we let $E(S)$ denote the edge set of the subgraph induced by $S$. If two subsets $S, T \subseteq V(G)$ are disjoint, $E(S,T)$ denotes the set
of edges with one endpoint in $S$ and the other in $T$. For simplicity, we write $e(S) = |E(S)|$ and $e(S,T) = |E(S,T)|$. The neighborhood and degree of a vertex $v \in V(G)$
are denoted by $N_G(v)$ and $d_G(v)$, respectively. For $S \subseteq V(G)$, we define $N_S(v) = N_G(v) \cap S$ and $d_S(v)=|N_S(v)|$.
Throughout this paper, we fix a constant $\zeta$ satisfying  $2\varepsilon<\zeta^3$ and $0<\zeta<\frac{1}{20p^4}$.

\begin{claim}\label{CLAIM3.2}
Choose a partition $V(G)=\bigcup_{i=1}^{p}U_i$ such that
$\sum_{1\leq i<j\leq p} e(U_i,U_j)$ is maximized over all $p$-partitions of $V(G)$.
Then $\sum_{i=1}^{p}e(U_i)\leq \zeta^3 n^2$ and $\big||U_i|-\frac{n}{p}\big|\leq \zeta n$ for every $i\in[p]$,
where $[p]=\{1,2,\ldots,p\}$.
\end{claim}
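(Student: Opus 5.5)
We derive Claim~\ref{CLAIM3.2} from Claim~\ref{CLAIM3.1} by comparing the max-cut partition with the partition inherited from $T_{n,p}$. By Claim~\ref{CLAIM3.1}, there is a partition $V(G)=V_1\cup\cdots\cup V_p$ whose parts are the parts of $T_{n,p}$ (so $|V_i|\in\{\lfloor n/p\rfloor,\lceil n/p\rceil\}$) such that $G$ differs from $T_{n,p}$ by at most $2\varepsilon n^2$ edges. In particular, $\sum_i e(V_i)\le 2\varepsilon n^2$, since every edge inside some $V_i$ is an added edge. Because $\{U_i\}$ maximizes the number of crossing edges, it also minimizes $\sum_i e(U_i)=e(G)-\sum_{i<j}e(U_i,U_j)$. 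Hence
$$\sum_{i=1}^p e(U_i)\le \sum_{i=1}^p e(V_i)\le 2\varepsilon n^2<\zeta^3 n^2,$$
using $2\varepsilon<\zeta^3$. This gives the first assertion.

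For the part sizes, we bound the crossing edges by the complete $p$-partite graph on the parts $U_1,\dots,U_p$:
$$e(G)-\zeta^3n^2\le \sum_{i<j}e(U_i,U_j)\le \sum_{i<j}|U_i||U_j|=\frac12\Big(n^2-\sum_i |U_i|^2\Big).$$
Write $|U_i|=n/p+a_i$ with $\sum_i a_i=0$. Then $\sum_i|U_i|^2=n^2/p+\sum_i a_i^2$. Combining this with $e(G)\ge \frac{p-1}{2p}n^2-O(n)$ from \eqref{EQU-2B}, we obtain
$$\frac{p-1}{2p}n^2-O(n)-\zeta^3n^2\le \frac{p-1}{2p}n^2-\frac12\sum_i a_i^2,$$
and therefore $\sum_i a_i^2\le 2\zeta^3n^2+O(n)\le 3\zeta^3 n^2$ for large $n$. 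Consequently $|a_i|\le \sqrt{3\zeta^3}\,n=\sqrt{3\zeta}\,\zeta n\le \zeta n$, since $\zeta<1/(20p^4)<1/3$.

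No step is a genuine obstacle. The only point needing care is that the minimality of $\sum_i e(U_i)$ under max-cut is compared with the specific Tur\'an partition supplied by Claim~\ref{CLAIM3.1}, and that the error terms $O(n)$ are absorbed for $n$ sufficiently large.
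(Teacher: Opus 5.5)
Your proposal is correct and follows essentially the same route as the paper. You compare the max-cut partition with the Tur\'an partition from Claim~\ref{CLAIM3.1} to get $\sum_i e(U_i)\le 2\varepsilon n^2<\zeta^3n^2$, and then combine $\sum_{i<j}|U_i||U_j|=\frac{p-1}{2p}n^2-\frac12\sum_i a_i^2$ with the lower bound on $e(G)$; the only difference is bookkeeping, since you absorb the $O(n)$ term to get $\sum_i a_i^2\le 3\zeta^3n^2$, while the paper bounds $\beta^2<6\zeta^3n^2$, and both yield $|a_i|\le \zeta n$.
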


\begin{proof}
From Claim \ref{CLAIM3.1} we know that $G$ can be obtained from $T_{n,p}$ by adding and deleting at most $2\varepsilon n^2$ edges.
Let
$V(G)=\bigcup_{i=1}^{p}V_i$
be the natural vertex partition of $T_{n,p}$.
Then
$\big\lfloor \frac{n}{p}\big\rfloor
\leq |V_i|\leq
\big\lceil \frac{n}{p}\big\rceil$
for every $i\in[p]$,
and $T_{n,p}$ has no edges inside any $V_i$.
Hence all edges of $G[V_i]$ come from the added edges, and therefore
$\sum_{i=1}^{p}e(V_i)\leq 2\varepsilon n^2<\zeta^3 n^2.$
Now choose a vertex partition
$V(G)=\bigcup_{i=1}^{p}U_i$,
which minimizes $\sum_{i=1}^{p}e(U_i)$ over all $p$-partitions of $V(G)$.
Equivalently, this partition maximizes $\sum_{1\leq i<j\leq p}e(U_i,U_j)$.
By the choice of $U_1,\dots,U_p$, we have
$$
\sum_{i=1}^{p}e(U_i)\leq\sum_{i=1}^{p}e(V_i)\leq \zeta^3 n^2.
$$

Set $\beta=\max_{i\in[p]}\big||U_i|-\frac{n}{p}\big|.$
Since $\sum_{i=1}^{p}|U_i|=n$, we have
\begin{align*}
\beta^2
\leq
\sum_{i=1}^{p}\Big(|U_i|-\frac{n}{p}\Big)^2
=
\sum_{i=1}^{p}|U_i|^2
-\frac{2n}{p}\sum_{i=1}^{p}|U_i|
+p\cdot \frac{n^2}{p^2}
=\sum_{i=1}^{p}|U_i|^2-\frac{n^2}{p}.
\end{align*}
Thus
$\sum_{i=1}^{p}|U_i|^2\geq \frac{n^2}{p}+\beta^2.$
Consequently,
\begin{align*}
\sum_{1\leq i<j\leq p}|U_i||U_j|
&=
\frac{1}{2}\left(
\Big(\sum_{i=1}^{p}|U_i|\Big)^2
-\sum_{i=1}^{p}|U_i|^2
\right)
\leq
\frac{p-1}{2p}n^2-\frac{\beta^2}{2}.
\end{align*}
Using the bound on the number of edges inside the parts, we obtain
\begin{align*}
e(G)
=
\sum_{1\leq i<j\leq p}e(U_i,U_j)
+\sum_{i=1}^{p}e(U_i)
\leq
\sum_{1\leq i<j\leq p}|U_i||U_j|
+\zeta^3 n^2
\leq
\frac{p-1}{2p}n^2-\frac{\beta^2}{2}
+\zeta^3 n^2.
\end{align*}

On the other hand, using \eqref{EQU-2B} and the assumption that $n$ is sufficiently large,
we have
$e(G)\geq e(T_{n,p})-\zeta^3 n^2
>(\frac{p-1}{2p}-2\zeta^3)n^2.$
Combining the last two inequalities gives
$(\frac{p-1}{2p}-2\zeta^3)n^2
<\frac{p-1}{2p}n^2-\frac{\beta^2}{2}+\zeta^3 n^2.$
Hence
$\beta^2<6\zeta^3 n^2.$
Since $\zeta<\frac{1}{20p^4}<\frac16$, we have
$\beta<\zeta n.$
By the definition of $\beta$, this means that
$\big||U_i|-\frac{n}{p}\big|<\zeta n$
 for every $i\in[p]$.
This completes the proof of Claim \ref{CLAIM3.2}.
\end{proof}

Following Claim~\ref{CLAIM3.2},
we fix the vertex partition $V(G)=\bigcup_{i=1}^{p}U_i$ of $G$ as defined therein.
Set $W=\{v\in V(G): d_G(v)\leq (\frac{p-1}{p}-10\zeta)n\}$.
For each $i\in[p]$, let
$R_i=\{v\in U_i: d_{U_i}(v)\geq 5\zeta n\},$
and set $R=\bigcup_{i=1}^{p}R_i$. We next establish upper bounds on $|W|$ and $|R|$ through the following two claims.

\begin{claim}\label{CLAIM3.3}
$|W|\leq \zeta n$.
\end{claim}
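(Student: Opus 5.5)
The plan is a double count of the edges of $G$. We bound $e(G)$ from above using the partition $U_1,\dots,U_p$ of Claim~\ref{CLAIM3.2} together with the low-degree vertices in $W$, and compare with the lower bound \eqref{EQU-2B}.

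\textbf{Setting up the count.} Suppose for contradiction that $|W|>\zeta n$. Take a subset $W'\subseteq W$ with $|W'|=\lfloor \zeta n\rfloor$. Every vertex of $W'$ has degree at most $(\frac{p-1}{p}-10\zeta)n$. Hence
\begin{align*}
e(G)\le e(G-W')+\sum_{v\in W'} d_G(v)\le e(G-W')+|W'|\Big(\frac{p-1}{p}-10\zeta\Big)n .
\end{align*}

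\textbf{Bounding $e(G-W')$.} The graph $G-W'$ has $n'=n-|W'|$ vertices. Restricting the partition $U_1,\dots,U_p$ to $V(G)\setminus W'$, the crossing edges number at most $e(T_{n',p})\le \frac{p-1}{2p}(n-|W'|)^2$. The edges inside parts number at most $\sum_i e(U_i)\le \zeta^3 n^2$ by Claim~\ref{CLAIM3.2}. Therefore
\begin{align*}
e(G)\le \frac{p-1}{2p}\big(n^2-2n|W'|+|W'|^2\big)+\zeta^3n^2+|W'|\Big(\frac{p-1}{p}-10\zeta\Big)n .
\end{align*}
The terms linear in $|W'|$ with coefficient $\frac{p-1}{p}n$ cancel. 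This leaves
\begin{align*}
e(G)\le \frac{p-1}{2p}n^2+\frac{p-1}{2p}|W'|^2-10\zeta n|W'|+\zeta^3n^2 .
\end{align*}

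\textbf{Reaching the contradiction.} With $|W'|\approx \zeta n$, the right-hand side is at most $\frac{p-1}{2p}n^2+(\frac12-10)\zeta^2n^2+\zeta^3n^2$, up to $O(n)$ rounding error. Since $\zeta<1/6$, we have $\zeta^3<\zeta^2$, so this is less than $\frac{p-1}{2p}n^2-9\zeta^2n^2$. However, \eqref{EQU-2B} gives $e(G)\ge\frac{p-1}{2p}n^2-O(n)$. This is a contradiction for $n$ large, so $|W|\le\zeta n$.

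\textbf{Main obstacle.} There is no real obstacle. The only care needed is the bookkeeping of floors, for example $|W'|=\lfloor\zeta n\rfloor$ versus $\zeta n$, and $e(T_{n,p})$ versus $\frac{p-1}{2p}n^2$. These contribute only $O(n)$ errors, which are absorbed by the $\Theta(\zeta^2n^2)$ gap for sufficiently large $n$.
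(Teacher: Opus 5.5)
Your argument is correct, but it reaches the contradiction differently from the paper. The paper deletes the same set $W_0$ of $\lfloor\zeta n\rfloor$ low-degree vertices and shows $e(G-W_0)>e(T_{n',p})+\zeta^2n^2$. It then invokes Mubayi's supersaturation theorem (Theorem~\ref{thm-Mubayi}) with $q_0=\lfloor\delta n'/2\rfloor$ to obtain $\Omega(n^p)$ copies of $K_{p+1}$, which contradicts the clique-count upper bound \eqref{ALIGN-3}.

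You instead restrict the max-cut partition of Claim~\ref{CLAIM3.2} to $V(G)\setminus W'$. This immediately caps $e(G-W')\le e(T_{n',p})+\zeta^3n^2$, so the same degree computation contradicts the edge lower bound \eqref{EQU-2B} directly. In effect, the paper's intermediate inequality $e(G-W_0)>e(T_{n',p})+\zeta^2n^2$ is already incompatible with Claim~\ref{CLAIM3.2}, since $\zeta^3<\zeta^2$, and you exploit exactly that.

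Your route is more elementary. It needs no external supersaturation theorem and no clique counting at this step, only the bound $\sum_i e(U_i)\le\zeta^3n^2$, which is already established. The paper's proof of this claim does not use Claim~\ref{CLAIM3.2} at all; it uses only \eqref{EQU-2B}, \eqref{ALIGN-3} and the degree definition of $W$. So it is independent of the partition, but it pays for that by importing a much heavier result.

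One small arithmetic slip: $\zeta^3<\zeta^2$ alone only brings $-\tfrac{19}{2}\zeta^2n^2+\zeta^3n^2$ below $-\tfrac{17}{2}\zeta^2n^2$. To get below $-9\zeta^2n^2$ you need $\zeta^3<\tfrac12\zeta^2$, which holds since $\zeta<\tfrac16$. Any fixed negative multiple of $\zeta^2n^2$ dominates the $O(n)$ rounding terms, so the conclusion is unaffected.
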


\begin{proof}

Suppose to the contrary that $|W|>\zeta n$.
Choose a subset $W_0\subseteq W$ with
$|W_0|=\lfloor \zeta n\rfloor,$
and let
$H=G-W_0.$
Set $n'=|H|=n-\lfloor \zeta n\rfloor$.
Since every vertex $v\in W_0$ satisfies
$d_G(v)\leq \big(\frac{p-1}{p}-10\zeta\big)n$
and noting that $|W_0|=\lfloor \zeta n\rfloor,$
it follows that
\begin{align*}
e(H)
\geq e(G)-\sum_{v\in W_0}d_G(v)
\geq e(G)-\lfloor \zeta n\rfloor
\Big(\frac{p-1}{p}-10\zeta\Big)n
\geq e(G)-\zeta n
\Big(\frac{p-1}{p}-10\zeta\Big)n.
\end{align*}
By \eqref{EQU-2B}, we have
$e(T_{n,p})-\big\lfloor\frac{n}{p}\big\rfloor+s+1
\geq(\frac{p-1}{2p}-\zeta^3)n^2.$
Therefore
\begin{align*}
e(H)
\geq\Big(\frac{p-1}{2p}-\zeta^3\Big)n^2-\zeta n\Big(\frac{p-1}{p}-10\zeta\Big)n
=\Big(
\frac{p-1}{2p}
-\frac{p-1}{p}\zeta
+10\zeta^2
-\zeta^3
\Big)n^2.
\end{align*}
Since $n'=n-\lfloor \zeta n\rfloor$, the choice of constants gives
$$
\left(
\frac{p-1}{2p}
-\frac{p-1}{p}\zeta
+10\zeta^2
-2\zeta^3
\right)n^2
>
\frac{p-1}{2p}(n')^2+\zeta^2 n^2.
$$
These, together with
$e(T_{n',p})\leq \frac{p-1}{2p}(n')^2,$
yield that
$e(H)>e(T_{n',p})+\zeta^2 n^2.$

Let $\delta=\delta_{K_{p+1}}>0$ be the constant given by Theorem \ref{thm-Mubayi}.
Since
$n'=n-\lfloor \zeta n\rfloor=\Theta(n)$, for sufficiently large $n$, define
$q_0=\lfloor \frac{\delta n'}{2}\rfloor .$
Then $1\leq q_0<\delta n'$. Moreover, since $q_0=O(n)$ while $\zeta^2 n^2=\Omega(n^2)$, we have
$e(H)>e(T_{n',p})+\zeta^2 n^2\geq e(T_{n',p})+q_0$
for sufficiently large $n$.
Applying Theorem \ref{thm-Mubayi} to $H$ with $F=K_{p+1}$ and $q=q_0$, we obtain
$\#K_{p+1}(H)\geq q_0\cdot c(n',K_{p+1}).$
Since $q_0=\Theta(n')$ and
$c(n',K_{p+1})=\Theta((n')^{p-1}),$
it follows that
$\#K_{p+1}(H)\geq \Omega((n')^p)=\Omega(n^p).$
As $H$ is an induced subgraph of $G$, we have
$\#K_{p+1}(G)\geq \#K_{p+1}(H)\geq \Omega(n^p)$,
which contradicts \eqref{ALIGN-3}. This contradiction proves that $|W|\leq \zeta n$.
\end{proof}

\begin{claim}\label{CLAIM3.4}
$|R|\leq \frac{1}{5}\zeta n$.
\end{claim}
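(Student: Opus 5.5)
The plan is a simple double count of the edges inside the parts, using the bound from Claim~\ref{CLAIM3.2}. By definition, every vertex $v\in R_i$ satisfies $d_{U_i}(v)\ge 5\zeta n$. Summing over all vertices of $R$ and noting that each edge inside some $U_i$ is counted at most twice, we get
\begin{align*}
5\zeta n\,|R|=\sum_{i=1}^{p}\sum_{v\in R_i}5\zeta n\le \sum_{i=1}^{p}\sum_{v\in R_i}d_{U_i}(v)\le \sum_{i=1}^{p}\sum_{v\in U_i}d_{U_i}(v)=2\sum_{i=1}^{p}e(U_i)\le 2\zeta^3 n^2 .
\end{align*}
Hence $|R|\le \frac{2}{5}\zeta^2 n$.

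It remains to compare this with the target bound $\frac15\zeta n$. Since $\frac{2}{5}\zeta^2 n\le \frac15\zeta n$ is equivalent to $\zeta\le \frac12$, and we chose $\zeta<\frac{1}{20p^4}$, the claim follows immediately.

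I expect no real obstacle. The only points to check are the factor $2$ coming from double counting, and that the bound $\sum_i e(U_i)\le\zeta^3n^2$ from Claim~\ref{CLAIM3.2} is the one being applied. Neither Claim~\ref{CLAIM3.3} nor the minimality of $G$ is needed here.
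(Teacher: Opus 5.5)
Your proof is correct and is the paper's argument: you double count $\sum_{v\in R_i} d_{U_i}(v)\le 2e(U_i)$, sum over $i$ with the bound $\sum_i e(U_i)\le\zeta^3n^2$ from Claim~\ref{CLAIM3.2} to get $|R|\le\frac25\zeta^2 n$, and then use $\zeta\le\frac12$. Your remark that neither Claim~\ref{CLAIM3.3} nor the extremal choice of $G$ is needed is also accurate.
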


\begin{proof}
For each $i\in[p]$, by the definition of $R_i$, every vertex $v\in R_i$ satisfies
$d_{U_i}(v)\geq 5\zeta n.$
Hence
$$
2e(U_i)=\sum_{v\in U_i}d_{U_i}(v)
\geq \sum_{v\in R_i}d_{U_i}(v)
\geq 5\zeta n |R_i|.
$$
Therefore,
$e(U_i)\geq \frac{5}{2}\zeta n |R_i|.$
Summing over all $i\in[p]$ and using Claim \ref{CLAIM3.2}, we obtain
$$
\zeta^3 n^2
\geq \sum_{i=1}^{p}e(U_i)
\geq \frac{5}{2}\zeta n \sum_{i=1}^{p}|R_i|
=
\frac{5}{2}\zeta n |R|.
$$
It follows that
$|R|\leq \frac{2}{5}\zeta^2 n.$
Since $\zeta\leq \frac{1}{2}$, we further have
$|R|\leq \frac{1}{5}\zeta n.$
\end{proof}

Define $\overline{U}_i=U_i\setminus (W\cup R)$ for every $i\in [p]$.
Given a $p$-partition $V(G)=U_1\cup U_2\cup\cdots\cup U_p$, an edge $e\in E(G)$ is called a \emph{class-edge} if its endpoints lie in the same part $U_i$ for some $i\in[p]$.
Otherwise, $e$ is called a \emph{cross-edge}.
With this notation, we obtain the following result.

\begin{claim}\label{CLAIM3.5}
Let $i_0, j_0\in [p]$ be two integers. Then

\vspace{1mm}
\textnormal{(i)} $d_{U_{i_0}}(u)\geq\big(\frac{1}{p^2}-4p\zeta\big)n$ for any $u\in \bigcup_{k\in [p]\setminus \{i_0\}}(R_{k}\setminus W)$;

\vspace{1mm}
\textnormal{(ii)} $d_{U_{i_0}}(u)\geq\big(\frac{1}{p}-6p\zeta\big)n$ for any $u\in \bigcup_{k\in [p]\setminus \{i_0\}}\overline{U}_{k}$;

\vspace{1mm}
\textnormal{(iii)} if $u\in \bigcup_{k\in [p]\setminus \{i_0\}}(R_{k}\setminus W)$ and
$\{u_1,\dots,u_{j_0}\}\subseteq \bigcup_{k\in [p]\setminus \{i_0\}}\overline{U}_k$, then there are at least $\frac{n}{2p^2}$ vertices in $\overline{U}_{i_0}$ adjacent to
$u_1,\dots,u_{j_0}$ and $u$.
\end{claim}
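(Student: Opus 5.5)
The plan is to derive all three parts from three facts established earlier: the degree lower bound for vertices outside $W$, the maximality of the partition from Claim~\ref{CLAIM3.2}, and the size bounds $\big||U_k|-\frac np\big|\le\zeta n$, $|W|\le\zeta n$, $|R|\le\frac15\zeta n$ from Claims~\ref{CLAIM3.2}--\ref{CLAIM3.4}.

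\textbf{Part (i).} Let $u\in R_k\setminus W$ with $k\neq i_0$. Since $u\notin W$, we have $d_G(u)>(\frac{p-1}{p}-10\zeta)n$. Maximality of $\sum_{i<j}e(U_i,U_j)$ means that moving $u$ to another part cannot increase the number of cross-edges. Hence $d_{U_k}(u)\le d_{U_j}(u)$ for every $j$, and in particular $d_{U_{i_0}}(u)\ge d_{U_k}(u)$. We then argue by contradiction. Suppose $d_{U_{i_0}}(u)<(\frac1{p^2}-4p\zeta)n$. Then $d_{U_k}(u)$ is also below this value. Bounding every other part trivially by $|U_j|\le\frac np+\zeta n$ gives
\[
d_G(u)<2\Big(\frac1{p^2}-4p\zeta\Big)n+(p-2)\Big(\frac np+\zeta n\Big).
\]
We check that this is at most $(\frac{p-1}{p}-10\zeta)n$. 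Since $p\ge3$, $\frac2{p^2}\le\frac1p$, so the main terms match: $\frac{2}{p^2}+\frac{p-2}{p}\le\frac{p-1}{p}$. For the $\zeta$-terms, $-8p\zeta+(p-2)\zeta\le-10\zeta$ when $p\ge 3$. This contradicts $u\notin W$.

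\textbf{Part (ii).} Let $u\in\overline U_k$. Then $u\notin W$, and since $u\notin R$, $d_{U_k}(u)<5\zeta n$. Using $d_{U_j}(u)\le|U_j|\le\frac np+\zeta n$ for all $j\ne i_0,k$, we get
\[
d_{U_{i_0}}(u)\ge d_G(u)-5\zeta n-(p-2)\Big(\frac np+\zeta n\Big)\ge \Big(\frac1p-(p+13)\zeta\Big)n,
\]
which is at least $(\frac1p-6p\zeta)n$ for $p\ge3$. This uses $p+13\le 6p$, i.e.\ $p\ge 13/5$. For $p=2$ the same computation still closes, but only $p\ge3$ is needed here.

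\textbf{Part (iii).} We count non-neighbours inside $U_{i_0}$. Each $u_\ell$ has at most
\[
|U_{i_0}|-d_{U_{i_0}}(u_\ell)\le \Big(\frac np+\zeta n\Big)-\Big(\frac1p-6p\zeta\Big)n=(6p+1)\zeta n
\]
non-neighbours in $U_{i_0}$. The vertex $u$ has at least $(\frac1{p^2}-4p\zeta)n$ neighbours in $U_{i_0}$. Removing the non-neighbours of all $u_\ell$, together with $W\cup R$, leaves at least
\[
\Big(\frac1{p^2}-4p\zeta-j_0(6p+1)\zeta-\zeta-\tfrac15\zeta\Big)n
\]
common neighbours in $\overline U_{i_0}$.

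This is the main obstacle: the bound needs $j_0$ to be bounded, and I will assume $j_0\le p$. This is implicit in the intended use, where we complete a clique with at most $p$ vertices. With $j_0\le p$, the subtracted terms total $O(p^2\zeta)n$. Since $\zeta<\frac1{20p^4}$, we have $p^2\zeta\le\frac1{20p^2}$, so these terms amount to at most roughly $\frac{8}{20p^2}n$ for $p\ge3$. The count is therefore at least $\frac{n}{2p^2}$. Checking this constant carefully, namely $4p+p(6p+1)+2\le 10p^2$ together with $10p^2\zeta\le\frac1{2p^2}$, is the only delicate point.
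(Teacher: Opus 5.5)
Your proof is correct and is essentially the paper's argument. Part (ii) and the inclusion--exclusion count in (iii) match it exactly, and in (i) you use the maximality comparison $d_{U_k}(u)\le d_{U_{i_0}}(u)$ directly where the paper uses $d_{U_k}(u)\le\frac1p d_G(u)$; also, $j_0\le p$ is part of the hypothesis $j_0\in[p]$, not an extra assumption. The only slip is your aside that (ii) also closes for $p=2$: there $p+13=15>12=6p$, so it does not, but this is harmless since $p\ge 3$ throughout this part of the paper.
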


\begin{proof}
(i) Assume that $u\in R_{k_0}\setminus W$ for some ${k_0}\in [p]\setminus \{i_0\}$.
Since $V(G)=\bigcup_{i=1}^{p}U_{i}$ is a partition such that $\sum_{1\leq i<j\leq p}e(U_{i},U_{j})$ is the maximum,
it follows that $d_{U_{k_0}}(u)\le d_{U_i}(u)$ for every $i\ne k_0$.
Therefore $d_{U_{{k_0}}}(u)\leq \frac{1}{p}d_{G}(u)$.
It follows from Claim \ref{CLAIM3.2} that $|U_{k}|\leq \big(\frac{1}{p}+\zeta\big)n$ for any $k\in [p]$.
These, together with $d_G(u)> \big(\frac{p-1}{p}-10\zeta\big)n$ (as $u\notin W$), give that
\begin{align*}
  d_{U_{i_0}}(u)
  &= d_G(u)-d_{U_{k_0}}(u)-\sum\limits_{k\in [p]\setminus\{i_0,k_0\}}d_{U_{k}}(u)\\
    &\geq \frac{p-1}{p}d_G(u)-(p-2)\Big(\frac{1}{p}+\zeta\Big)n
    \geq \Big(\frac{1}{p^2}-4p\zeta\Big)n.
\end{align*}

(ii) Assume that $u\in \overline{U}_{k_0}$ for some $k_0\in [p]\setminus \{i_0\}$.
 Since $u\notin W$, we get $d_G(u)> \big(\frac{p-1}{p}-10\zeta\big)n$;
Since $u\notin R_{k_0}$, we have $d_{U_{k_0}}(u)<5\zeta n$.
Then it follows from $d_{U_k}(u)\leq |U_{k}|\leq \big(\frac{1}{p}+\zeta\big)n$ for any $k\in [p]\setminus\{i_0,k_0\}$ that
\begin{align*}
  d_{U_{i_0}}(u)
    &= d_G(u)-d_{U_{k_0}}(u)-\sum\limits_{k\in [p]\setminus\{i_0,k_0\}}d_{U_{k}}(u)\\
    &\geq \Big(\frac{p-1}{p}-10\zeta\Big)n-5\zeta n- (p-2)\Big(\frac{1}{p}+\zeta\Big)n
    \geq \Big(\frac{1}{p}-6p\zeta\Big)n.
\end{align*}

(iii) In view of (i), we have
$d_{U_{i_0}}(u)\geq (\frac{1}{p^2}-4p\zeta)n.$
Moreover, since each $u_k$ belongs to $\bigcup_{\ell\in[p]\setminus\{i_0\}}\overline{U}_{\ell}$,
it follows from (ii) that
$d_{U_{i_0}}(u_k)\geq (\frac{1}{p}-6p\zeta)n$
for every  $k\in[j_0]$.
Let
$X=N_{U_{i_0}}(u)\cap \bigcap_{k=1}^{j_0}N_{U_{i_0}}(u_k).$
Indeed, we use the following elementary fact.
\begin{align*}
\Big|\bigcap_{k=1}^{j_0}N_{U_{i_0}}(u_k)\Big|
&=|U_{i_0}|-\Big|U_{i_0}\setminus\bigcap_{k=1}^{j_0}N_{U_{i_0}}(u_k)\Big|\\
&\ge|U_{i_0}|-\sum_{k=1}^{j_0}\Big(|U_{i_0}|-|N_{U_{i_0}}(u_k)|\Big)\\
&=\sum_{k=1}^{j_0}|N_{U_{i_0}}(u_k)|-(j_0-1)|U_{i_0}|.
\end{align*}
Consequently,
\begin{align*}
|X|
&\geq |N_{U_{i_0}}(u)|
+\sum_{k=1}^{j_0}|N_{U_{i_0}}(u_k)|
-j_0|U_{i_0}| \\
&\geq
\Big(\frac{1}{p^2}-4p\zeta\Big)n
+j_0\Big(\frac{1}{p}-6p\zeta\Big)n
-j_0\Big(\frac{1}{p}+\zeta\Big)n \\
&=
\Big(\frac{1}{p^2}-4p\zeta-j_0(6p+1)\zeta\Big)n.
\end{align*}
Since $j_0\leq p$, and using $p\geq 3$, we have
$$
4p+j_0(6p+1)\leq 4p+p(6p+1)=6p^2+5p\leq 8p^2.
$$
Thus
$|X|\geq (\frac{1}{p^2}-8p^2\zeta)n.$
By Claims \ref{CLAIM3.3} and \ref{CLAIM3.4},
$|W\cup R|\leq \zeta n+\frac{1}{5}\zeta n=\frac{6}{5}\zeta n$.
Combined with the assumption $\zeta<\frac{1}{20p^4}$, we deduce
$\Big(\frac{1}{p^2}-8p^2\zeta\Big)n
\geq\frac{6}{5}\zeta n+\frac{n}{2p^2}
\geq |W\cup R|+\frac{n}{2p^2}
$.
We thus conclude that
$|X|\geq |W\cup R|+\frac{n}{2p^2}$.
Since $\overline{U}_{i_0}=U_{i_0}\setminus (W\cup R)$, it follows that
$|X\cap \overline{U}_{i_0}|
\geq |X|-|W\cup R|
\geq \frac{n}{2p^2}.$
Hence there are at least $\frac{n}{2p^2}$ vertices in $\overline{U}_{i_0}$ adjacent to all of $u,u_1,\ldots,u_{j_0}$.
\end{proof}

\begin{claim}\label{CLAIM3.6}
Let $i\in[p]$,
$u\in \overline{U}_i\cup (R_i\setminus W)$ and $v\in \overline{U}_i$.
If $uv \in E(G[U_i])$, then
$\#K_{p+1}(G,uv)\geq (\frac{n}{3p^2})^{p-1}$,
where $\#K_{p+1}(G,uv)$ denotes the number of copies of $K_{p+1}$ in $G$
that contain the edge $uv$.
\end{claim}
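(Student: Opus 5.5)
We build the cliques greedily, one part at a time, using Claim~\ref{CLAIM3.5}(iii). Fix $i\in[p]$ and the edge $uv\in E(G[U_i])$, where $u\in \overline{U}_i\cup(R_i\setminus W)$ and $v\in\overline{U}_i$. Relabel the other parts as $U_{k_1},\dots,U_{k_{p-1}}$. We will choose vertices $w_1\in\overline{U}_{k_1}, w_2\in\overline{U}_{k_2},\dots,w_{p-1}\in\overline{U}_{k_{p-1}}$, one after another. At step $t$ the chosen vertex $w_t$ must be adjacent to $u,v,w_1,\dots,w_{t-1}$. If at every step there are at least $\frac{n}{2p^2}$ admissible choices for $w_t$, then each resulting set $\{u,v,w_1,\dots,w_{p-1}\}$ spans a $K_{p+1}$ containing $uv$. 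These copies are distinct: the $w_t$ lie in distinct parts different from $U_i$, so the ordered choice determines the clique uniquely. This gives at least $(\frac{n}{2p^2})^{p-1}\ge(\frac{n}{3p^2})^{p-1}$ copies.

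The key step is to verify the count at step $t$ by applying Claim~\ref{CLAIM3.5}(iii) with $i_0=k_t$. Its hypothesis asks for one vertex in $\bigcup_{k\ne i_0}(R_k\setminus W)$ together with up to $j_0\le p$ vertices from $\bigcup_{k\ne i_0}\overline{U}_k$. The vertices $v,w_1,\dots,w_{t-1}$ all lie in $\overline{U}$-sets of parts different from $U_{k_t}$, and there are $t\le p-1\le p$ of them.

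The main obstacle is that Claim~\ref{CLAIM3.5}(iii) is stated with $u$ in some $R_k\setminus W$, while here $u$ may lie in $\overline{U}_i$. We split into two cases.
\begin{itemize}
\item If $u\in R_i\setminus W$, we apply (iii) directly, with $u$ as the special vertex and $\{v,w_1,\dots,w_{t-1}\}$ as $u_1,\dots,u_{j_0}$.
\item If $u\in\overline{U}_i$, we put $u$ among the $\overline{U}$-vertices, so $j_0=t+1\le p$. We cannot simply omit the special vertex, so we rerun the inclusion--exclusion computation of the proof of (iii) without the $R$-vertex. Part (ii) gives $|N_{U_{k_t}}(x)|\ge(\frac1p-6p\zeta)n$ for each of the $j_0$ vertices $x$, and $|U_{k_t}|\le(\frac1p+\zeta)n$. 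Hence the common neighbourhood in $U_{k_t}$ has size at least $(\frac1p-j_0(6p+1)\zeta)n$.
\end{itemize}

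In the second case, after removing $|W\cup R|\le\frac65\zeta n$ (Claims~\ref{CLAIM3.3} and \ref{CLAIM3.4}), at least $\frac{n}{2p^2}$ vertices remain in $\overline{U}_{k_t}$. This uses $\zeta<\frac{1}{20p^4}$. Alternatively, one may use a slight variant of (iii) in which the special vertex $u$ is dropped, since it only weakens the degree requirement. With both cases handled, the product bound completes the claim.
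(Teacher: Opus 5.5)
Your proposal is correct and follows essentially the same route as the paper: you pick one vertex per remaining part, applying Claim~\ref{CLAIM3.5}(iii) directly when $u\in R_i\setminus W$. When $u\in\overline{U}_i$ you instead rerun the inclusion--exclusion with Claim~\ref{CLAIM3.5}(ii), remove $W\cup R$, and obtain at least $\frac{n}{2p^2}$ choices at each of the $p-1$ steps, exactly as in the paper's Cases~1 and~2.
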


\begin{proof}
Suppose that $uv \in E(G[U_i])$. Without loss of generality, we may assume $i=1$.
We will construct copies of $K_{p+1}$ containing the edge $uv$ by sequentially selecting one vertex from each of $\overline U_2, \ldots, \overline U_p$.

Specifically, we recursively define singletons $\widehat U_k=\{x_k\} \subseteq \overline U_k$ for $2 \leq k \leq p$ such that $x_k$ is adjacent to every vertex in
$\bigcup_{t=1}^{k-1} \widehat U_t$. Set $\widehat U_1=\{u, v\}$. We shall show that, at each step $k$, there are at least $\lfloor \frac{n}{2p^2} \rfloor$ candidates for $x_k$.
We proceed by analyzing two cases based on the location of $u$.

\medskip
\noindent\textbf{Case 1.} $u \in R_1 \setminus W$.

Fix $k\in\{2,\ldots,p\}$ and assume
that $x_2,\ldots,x_{k-1}$ have already been chosen. Then we have
$$\{v,x_2,\ldots,x_{k-1}\}\subseteq \bigcup_{j\in[p]\setminus\{k\}}\overline U_j.$$
Furthermore, since $u \in R_1 \setminus W$ and $k \neq 1$, it follows that $u \in \bigcup_{j \in [p] \setminus \{k\}} (R_j \setminus W)$. Applying Claim~\ref{CLAIM3.5} (iii)
with $i_0 = k$, we obtain at least $\frac{n}{2p^2}$ vertices in $\overline U_k$ that are adjacent to all of $u, v, x_2, \ldots, x_{k-1}$. Thus, there are at least $\lfloor
\frac{n}{2p^2} \rfloor$ choices for $x_k$.

\medskip
\noindent\textbf{Case 2.} $u \in \overline U_1$.

Again, fix $k \in \{2, \ldots, p\}$ and assume that $x_2, \ldots, x_{k-1}$ have been chosen. Let $S = \bigcup_{t=1}^{k-1} \widehat U_t = \{u, v, x_2, \ldots, x_{k-1}\}$. Then
$|S| = k$, and every vertex in $S$ belongs to $\bigcup_{j \in [p] \setminus \{k\}} \overline U_j$. By Claim~\ref{CLAIM3.5} (ii), for each $x \in S$, we have $d_{U_k}(x) \geq
\left(\frac{1}{p} - 6p\zeta\right)n$.
Thus,
\begin{align*}
\Big| \bigcap_{x\in S}N_{U_k}(x) \Big|
&\ge k\Big(\frac1p-6p\zeta\Big)n-(k-1)\Big(\frac1p+\zeta\Big)n  \\
&= \Big(\frac1p-(6pk+k-1)\zeta\Big)n  \\
&\ge\Big(\frac1p-7p^2\zeta\Big)n.
\end{align*}
Deleting the vertices in $W\cup R$, and using $|W\cup R|\le \frac65\zeta n$ and $\zeta<\frac1{20p^4}$,
we obtain
 $$\Big|\bigcap_{x\in S}N_{\overline U_k}(x)\Big|\ge\Big(\frac1p-7p^2\zeta-\frac65\zeta\Big)n\ge\frac{n}{2p^2}.$$
Hence there are at least
$\big\lfloor \frac{n}{2p^2}\big\rfloor$ choices for $x_k$.

In either case, we can recursively choose $x_k \in \overline U_k$ for each $k = 2, \ldots, p$ with at least $\lfloor\frac{n}{2p^2}\rfloor$ choices at each step. Every resulting
set $\{u, v, x_2, \ldots, x_p\}$ spans a copy of $K_{p+1}$ containing the edge $uv$. Therefore,
for sufficiently large $n$,
$\#K_{p+1}(G,uv)
\geq
\lfloor \frac{n}{2p^2}\rfloor^{p-1}
\geq
\big(\frac{n}{3p^2}\big)^{p-1}$,
as desired.
\end{proof}

\begin{claim}\label{CLAIM3.7}
For every $i\in[p]$, we have
$E\big(G[\overline U_i]\big)=\varnothing$ and $R_i\subseteq W$.
\end{claim}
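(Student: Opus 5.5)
Both parts are proved by contradiction, using the minimality of $\#K_{p+1}(G)$: any of the configurations we want to exclude would force $\Omega(n^{p-1})$ cliques, which is far more than the bound \eqref{ALIGN-3}.

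\emph{First part, $E(G[\overline U_i])=\varnothing$.} Suppose some edge $uv$ lies inside $\overline U_i$. Claim~\ref{CLAIM3.6} (with $u\in\overline U_i$) gives
\[
\#K_{p+1}(G)\ge \#K_{p+1}(G,uv)\ge \Big(\frac{n}{3p^2}\Big)^{p-1}.
\]
By \eqref{ALIGN-3}, $\#K_{p+1}(G)\le s(n/p)^{p-2}+O(n^{p-3})$. Since $s$ and $p$ are fixed and $n$ is large, these two bounds are incompatible.

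\emph{Second part, $R_i\subseteq W$.} Suppose $u\in R_i\setminus W$. Then $d_{U_i}(u)\ge 5\zeta n$. By Claims~\ref{CLAIM3.3} and~\ref{CLAIM3.4}, $|W\cup R|\le \frac65\zeta n$. Hence $u$ has at least $5\zeta n-\frac65\zeta n>3\zeta n$ neighbours in $\overline U_i$. Pick any such neighbour $v\in\overline U_i$. Claim~\ref{CLAIM3.6} applies to the pair $u\in R_i\setminus W$, $v\in\overline U_i$, so again $\#K_{p+1}(G,uv)\ge (n/(3p^2))^{p-1}$, and we reach the same contradiction with \eqref{ALIGN-3}. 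One could even sum over the $\Omega(n)$ choices of $v$, but a single edge already suffices.

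The only real step is checking that $u$ has a neighbour in $\overline U_i$ rather than only in $W\cup R$. This is exactly the counting above, using $5\zeta n>\frac65\zeta n$. The rest is the comparison of orders of magnitude: $n^{p-1}\gg n^{p-2}$ for fixed $p\ge3$ and $s$.
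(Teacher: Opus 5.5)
Your proof is correct and matches the paper's argument. Both parts reduce to Claim~\ref{CLAIM3.6} contradicting \eqref{ALIGN-3}, and for $R_i\subseteq W$ you use the same count $5\zeta n-\frac65\zeta n>0$ to find a neighbour of $u$ in $\overline U_i$. The only difference is that you prove the two parts in the opposite order, which does not matter.
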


\begin{proof}
We first prove that $R_i\subseteq W$ for every $i\in[p]$.
Suppose to the contrary that there exists some $i_0\in[p]$ such that
$R_{i_0}\setminus W\neq \varnothing.$
Choose a vertex $u\in R_{i_0}\setminus W$. Since $u\in R_{i_0}$, we have
$d_{U_{i_0}}(u)\geq 5\zeta n.$
Applying Claims \ref{CLAIM3.3} and \ref{CLAIM3.4}, we obtain
$|W\cup R|\leq |W|+|R|\leq \frac{6}{5}\zeta n.$
Since $\overline U_{i_0}=U_{i_0}\setminus (W\cup R)$, it follows that
\begin{align*}
d_{\overline U_{i_0}}(u)
\geq d_{U_{i_0}}(u)-|W\cup R|
\geq 5\zeta n-\frac{6}{5}\zeta n
>0.
\end{align*}
Hence there exists a vertex $v\in \overline U_{i_0}$ such that
$uv\in E\big(G[U_{i_0}]\big).$
By Claim \ref{CLAIM3.6}, we have
$\#K_{p+1}(G)\geq \#K_{p+1}(G,uv)\geq \big(\frac{n}{3p^2}\big)^{p-1},$
which contradicts \eqref{ALIGN-3}. Thus $R_i\subseteq W$ for every $i\in[p]$.

It remains to prove that $E\big(G[\overline U_i]\big)=\varnothing$ for every $i\in[p]$.
Suppose otherwise that there exists an edge
$uv\in E\big(G[\overline U_i]\big)$
for some $i\in[p]$. Since
$u\in \overline U_i\subseteq \overline U_i\cup (R_i\setminus W)$
and
$v\in \overline U_i$,
Claim \ref{CLAIM3.6} implies that
$\#K_{p+1}(G,uv)=\Omega(n^{p-1}).$
Consequently,
$\#K_{p+1}(G)\geq \#K_{p+1}(G,uv)=\Omega(n^{p-1}),$
again contradicting \eqref{ALIGN-3}. Hence
$E\big(G[\overline U_i]\big)=\varnothing$
for every $i\in[p]$. The proof is complete.
\end{proof}

\begin{claim}\label{CLAIM3.8}
There exists a vertex $u^*\in V(G)$ such that
$\chi(G-\{u^*\})=p.$
\end{claim}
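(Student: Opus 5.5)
The plan is to exploit the structure from Claims~\ref{CLAIM3.2}--\ref{CLAIM3.7}: each $\overline U_i$ is independent and every vertex that is not yet well behaved lies in $W$ (since $R_i\subseteq W$). So only the vertices of $W$ can destroy $p$-partiteness. Three steps follow: show $|W|$ is bounded by a constant, show that at most one vertex of $W$ is ``universal'' to all $p$ classes, and then colour the rest.

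\textbf{Step 1: $|W|=O(1)$.} I will refine Claim~\ref{CLAIM3.3} by edge counting against $e(G)\ge e(T_{n,p})-\lfloor n/p\rfloor+s+1$.
\begin{itemize}
\item All class-edges of $G$ are incident to $W$.
\item Each $w\in W$ has degree at most $(\frac{p-1}{p}-10\zeta)n$.
\item Hence $e(G)\le \sum_{i<j}|U_i||U_j|+\sum_i e(U_i)$, and the class-edge term is at most $|W|\cdot\frac1p d(w)$ by the max-cut property used in Claim~\ref{CLAIM3.5}(i).
\end{itemize}
Comparing vertex by vertex, each $w\in W$ costs about $10\zeta n-O(\zeta n/p)$ edges relative to the Tur\'an count. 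Since the total deficit allowed is only $n/p$, this gives $|W|\le C(p,\zeta)$, a constant.

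\textbf{Step 2: at most one vertex of $W$ sees every class.} Call $w\in W$ \emph{universal} if $d_{\overline U_i}(w)\ge 1$ for every $i\in[p]$, and let $a(w)=\min_i d_{\overline U_i}(w)$.
\begin{itemize}
\item \emph{Clique count.} For a universal $w$, build cliques $w,x_1,\dots,x_p$ with $x_i\in\overline U_i$ as in the proof of Claim~\ref{CLAIM3.6}: choose $x_{i_1}$ in the smallest neighbourhood, then pick the other $x_i$ successively in common neighbourhoods. Using Claim~\ref{CLAIM3.5}(ii) for the $\overline U$-vertices, and restricting $w$'s neighbourhoods if needed, this gives $\#K_{p+1}(G)\ge c\,a(w)\,n^{p-2}$ for some $c>0$. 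Together with \eqref{ALIGN-3}, $a(w)=O(s)$.
\item \emph{Degree bound.} Consequently $d(w)\le (p-1)(\frac np+\zeta n)+O(s)+|W|$. More precisely, $w$ loses at least $|\overline U_i|-O(s)\approx n/p$ edges compared with a vertex of part $i$ in $T_{n,p}$.
\item \emph{Two universal vertices are too costly.} If $w,w'$ were both universal, the total loss would be about $2n/p$, exceeding the allowed $\lfloor n/p\rfloor-s-1$. Hence there is at most one universal vertex.
\end{itemize}
If there is none, we take $u^*$ to be an arbitrary vertex. Otherwise $u^*$ is the unique universal vertex.

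\textbf{Step 3: colour $G-u^*$.} Every vertex $w\in W\setminus\{u^*\}$ misses some class $\overline U_{i(w)}$, and we put $w$ into that class. The sets $\overline U_i$ are independent and $w$ has no neighbours in $\overline U_{i(w)}$. So the only possible conflicts are edges between two vertices of $W\setminus\{u^*\}$ assigned to the same class.

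\textbf{Main obstacle.} Resolving these intra-$W$ conflicts is the step I expect to be hardest. My plan is to use the extremal choice of $G$ (fewest $K_{p+1}$, then most edges) together with the edge budget. A vertex missing a whole class already loses about $n/p$ edges, so in fact at most one vertex of $W$ can miss a class at all, unless it is compensated. Redoing the Step~2 deficit computation for all non-universal vertices of $W$ at once should show that at most one vertex of $W$ can carry any deficit of order $n/p$. Every other vertex of $W$ would then have a class it sees only $O(1)$ times, which contradicts $w\in W$ having low degree. This leaves essentially a single exceptional vertex, which must be $u^*$, and then $G-u^*$ is properly $p$-coloured by the classes $\overline U_i$ together with the $W$-vertices reassigned as above.

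If that accounting fails, the fallback is a local modification. Delete an intra-$W$ edge and add a missing cross edge between $W$ and some $\overline U_j$ (Step~1 shows such edges are abundant). This does not decrease $e(G)$, keeps $G$ non-$p$-partite because $u^*$ is still present, and does not increase the $K_{p+1}$ count. That would contradict the secondary maximality of $e(G)$ or allow us to pass to a graph satisfying the desired structure.
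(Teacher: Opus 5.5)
There is a genuine gap, located exactly at what you call the main obstacle. Nothing in the proposal excludes an edge between two vertices of $W\setminus\{u^*\}$ that your rule sends to the same class, and neither of your remedies closes this.

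\begin{itemize}
\item The accounting claim that a vertex missing a whole class ``already loses about $n/p$ edges'' fails in the most common case. A vertex $w\in W\cap U_i$ with no neighbour in its own class $\overline U_i$ pays nothing for this beyond the deficit already forced by $d_G(w)\le(\frac{p-1}{p}-10\zeta)n$, which can be as small as $O(\zeta n)$. So the budget $\lfloor n/p\rfloor-s-1$ permits several such vertices, and two adjacent ones whose only missed class is the same give a conflict you never resolve.
\item If no vertex of $W$ is universal, a conflict-free run of your rule would properly $p$-colour $G$ itself. Hence conflicts must exist in that case, and deleting an arbitrary $u^*$ does not remove them.
\item The fallback swap is not sound. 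Adding a cross-edge $wx$ with $x\in\overline U_j$ can create new copies of $K_{p+1}$ through $wx$. Deleting an intra-$W$ edge can destroy the only obstruction to $p$-colourability; the presence of $u^*$ does not guarantee non-$p$-partiteness. And a move that merely does not worsen $(\#K_{p+1},e)$ contradicts nothing.
\item Separately, the Step~2 bound $\#K_{p+1}(G)\ge c\,a(w)n^{p-2}$ is unjustified. If $w$ has only a few neighbours in two classes, these need not be adjacent to each other, and $w$ may lie in no $K_{p+1}$ at all.
\end{itemize}

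The missing idea is to apply the extremal choice of $G$ to a single vertex-neighbourhood replacement; this makes Steps 1--3 unnecessary.
\begin{itemize}
\item By Theorem~\ref{Brouwer} there is a copy $F$ of $K_{p+1}$, and it contains a class-edge. By Claim~\ref{CLAIM3.7} every class-edge meets $W$, so you can pick $u^*\in V(F)\cap W$, say $u^*\in U_{i_0}$. Then $\chi(G-\{u^*\})\ge p$ since $F-\{u^*\}\cong K_p$.
\item If $\chi(G-\{u^*\})\ge p+1$, replace $N_G(u^*)$ by $\bigcup_{i\ne i_0}\overline U_i$. The new graph is still non-$p$-partite because $G-\{u^*\}$ is untouched.
\item It has more edges, because $|\bigcup_{i\ne i_0}\overline U_i|\ge(\frac{p-1}{p}-\frac{11}{5}\zeta)n>(\frac{p-1}{p}-10\zeta)n\ge d_G(u^*)$.
\item In it, $u^*$ lies in no $K_{p+1}$, since its new neighbourhood is $(p-1)$-partite. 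So it has exactly $\#K_{p+1}(G-\{u^*\})<\#K_{p+1}(G)$ copies, the strict inequality coming from $F$.
\end{itemize}
This contradicts the minimality of $\#K_{p+1}(G)$, hence $\chi(G-\{u^*\})=p$.
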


\begin{proof}

We first prove that $W\neq\varnothing$. Suppose, to the contrary, that
$W=\varnothing$. By Claim \ref{CLAIM3.7}, we have $R_i\subseteq W$ for every
$i\in[p]$, and hence
$
R=\bigcup_{i=1}^p R_i=\varnothing.
$
It follows that $\overline U_i=U_i$ for every $i\in[p]$.
Again by Claim~\ref{CLAIM3.7}, $E(G[\overline U_i])=\varnothing$, and
therefore $E(G[U_i])=\varnothing$ for every $i\in[p]$.
Thus $U_1,\ldots,U_p$ form a proper $p$-partition of $G$, contradicting the
assumption that $G$ is non-$p$-partite. Therefore, $W\neq\varnothing$.

Since $e(G)\geq e(Y_{n,p,s})$,
 Theorem \ref{Brouwer} implies that $G$ contains a
copy of $K_{p+1}$. Fix such a copy $F$. Since
$V(G)=U_1\cup\cdots\cup U_p$, the pigeonhole principle implies that $F$
contains two vertices belonging to the same part $U_i$ for some $i\in[p]$.
Since $F$ is a clique, these two vertices form an edge in $G[U_i]$.

Moreover, Claim \ref{CLAIM3.7} gives
$R=\bigcup_{j=1}^p R_j\subseteq W$
and $E(G[\overline U_i])=\varnothing$. Hence every vertex in
$U_i\setminus W$ lies in $\overline U_i$. Consequently, every edge of
$G[U_i]$ has at least one endpoint in $W$. In particular,
$V(F)\cap W\neq\varnothing$.
Choose $u^*\in V(F)\cap W$, and let $i_0\in[p]$ be such that
$u^*\in U_{i_0}$. Since $F-\{u^*\}\cong K_p$, the graph $G-\{u^*\}$ contains a copy
of $K_p$. Therefore,
$\chi\big(G-\{u^*\}\big)\geq p.$
If $\chi\big(G-\{u^*\}\big)=p$, then we are done.
Suppose, for a contradiction, that
$\chi\big(G-\{u^*\}\big)\geq p+1.$

Construct a graph $G'$ from $G$ by deleting all edges incident with $u^*$ and
then joining $u^*$ to every vertex in
$
\cup_{i\in[p]\setminus\{i_0\}}\overline U_i.
$
Since no edge not incident with $u^*$ is changed, we have
$G'-\{u^*\}=G-\{u^*\}.$
It follows that
$$
\chi(G')
\geq
\chi\big(G'-\{u^*\}\big)
=
\chi\big(G-\{u^*\}\big)
\geq p+1.
$$
Thus $G'$ is non-$p$-partite.

We next compare the numbers of edges in $G$ and $G'$. By Claims
\ref{CLAIM3.2}, \ref{CLAIM3.3}, and \ref{CLAIM3.4}, we have
$$
\begin{aligned}
\sum_{i\neq i_0}|\overline U_i|
\geq n-|U_{i_0}|-|W\cup R|
\geq n-\Big(\frac{1}{p}+\zeta\Big)n-\frac{6}{5}\zeta n
=\Big(\frac{p-1}{p}-\frac{11}{5}\zeta\Big)n.
\end{aligned}
$$
On the other hand, since $u^*\in W$, the definition of $W$ gives
$
d_G(u^*)
\leq
(\frac{p-1}{p}-10\zeta)n.
$
Since the only modified edges are those incident with $u^*$, we obtain
$$
\begin{aligned}
e(G')-e(G)
=
\sum_{i\neq i_0}|\overline U_i|-d_G(u^*)
\geq
\Big(\frac{p-1}{p}-\frac{11}{5}\zeta\Big)n
-
\Big(\frac{p-1}{p}-10\zeta\Big)n
=
\frac{39}{5}\zeta n
>0.
\end{aligned}
$$
Therefore,
$$
e(G')>e(G)
\geq
e\big(T_{n,p}\big)-\Big\lfloor\frac{n}{p}\Big\rfloor+s+1.
$$
Thus $G'$ satisfies the same edge condition as $G$ and is also
non-$p$-partite.

It remains to compare the numbers of copies of $K_{p+1}$ in $G$ and $G'$.
By construction,
$
N_{G'}(u^*)
=\cup_{i\in[p]\setminus\{i_0\}}\overline U_i.
$
By Claim \ref{CLAIM3.7}, each $\overline U_i$ is independent. Moreover, the
construction of $G'$ does not change any edge whose endpoints are both
different from $u^*$. Hence
$G'[N_{G'}(u^*)]$
is $(p-1)$-partite and therefore contains no copy of $K_p$. Consequently, no
copy of $K_{p+1}$ in $G'$ contains $u^*$.
Since $G'-\{u^*\}=G-\{u^*\}$, it follows that
$
\#K_{p+1}(G')
=\#K_{p+1}(G-\{u^*\}).$
On the other hand, $u^*$ belongs to the copy $F$ of $K_{p+1}$ in $G$, so
$
\#K_{p+1}(G-\{u^*\})
<\#K_{p+1}(G).
$
Therefore,
$
\#K_{p+1}(G')
<\#K_{p+1}(G),
$
contradicting the extremal choice of $G$. This contradiction shows that
$\chi\big(G-\{u^*\}\big)=p.$
The proof is complete.
\end{proof}

Among all pairs $(G,u)$ such that $G$ satisfies the previous two extremal
conditions and $\chi(G-\{u\})=p$, choose one for which $d_G(u)$ is minimum,
and denote it by $(G,u^*)$.
Now let $V_1,V_2,\ldots,V_p$ be the color classes of a proper $p$-coloring of
$G-\{u^*\}$.
Since $G-\{u^*\}$ is $p$-partite, it contains no copy of
$K_{p+1}$. Hence every copy of $K_{p+1}$ in $G$ must contain $u^*$. Since $G$
contains a copy of $K_{p+1}$,
 after relabeling the color classes if necessary,
there exist vertices $v_i\in V_i$ for each $i\in[p]$ such that
$G[\{u^*,v_1,\ldots,v_p\}]\cong K_{p+1}$.
Moreover, deleting $u^*$ gives a bijection between the copies of $K_{p+1}$ in
$G$ and the copies of $K_p$ in $G[N_G(u^*)]$.
Consequently,
$\#K_{p+1}(G)=\#K_p(G[N_G(u^*)])$.

Let $K$ be the complete $p$-partite graph with vertex classes
$V_1,V_2,\ldots,V_p$. Since each $V_i$ is an independent set in $G-\{u^*\}$, we
have $E(G-\{u^*\})\subseteq E(K)$. Define
$$
M=E(K)\setminus E(G-\{u^*\}).
$$
Thus $M$ is precisely the set of missing cross-edges of $G-\{u^*\}$ with respect
to the partition $V_1\cup V_2\cup\cdots\cup V_p$.

\begin{claim}\label{CLAIM3.9}
The graph $G-\{u^*\}$ is a complete $p$-partite graph.
\end{claim}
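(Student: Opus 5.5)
We argue by contradiction: suppose $M\neq\varnothing$, pick a missing cross-edge $xy\in M$ with $x\in V_i$, $y\in V_j$, $i\neq j$, and set $G''=G+xy$. The goal is to show that adding $xy$ creates no new copy of $K_{p+1}$. Then $G''$ contradicts the secondary choice of $G$: it is non-$p$-partite (it contains $G$), it has more edges, and it has the same number of copies of $K_{p+1}$.

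\textbf{Counting in $G''$.} The vertex $u^*$ is untouched, so $G''-\{u^*\}=(G-\{u^*\})+xy$ is still $p$-partite with classes $V_1,\dots,V_p$. Hence every copy of $K_{p+1}$ in $G''$ contains $u^*$, and
\[
\#K_{p+1}(G'')=\#K_p\bigl(G''[N_G(u^*)]\bigr).
\]
The neighborhood of $u^*$ is unchanged. Therefore, if $x\notin N_G(u^*)$ or $y\notin N_G(u^*)$, we get $\#K_{p+1}(G'')=\#K_{p+1}(G)$ while $e(G'')>e(G)$, which is the desired contradiction. We conclude that every missing cross-edge has both endpoints in $N_G(u^*)$.

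\textbf{Missing edges inside $N_G(u^*)$.} This is the main obstacle. Here adding $xy$ may create new $K_p$'s in $N_G(u^*)$, so a different modification is needed. The natural one uses the minimality of $d_G(u^*)$: delete the edge $u^*x$ and add $xy$. The edge count is preserved, and $\chi(G''-\{u^*\})=p$ still holds.

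We must check two things.

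First, the new graph must still be non-$p$-partite. If $u^*$ still has a neighbor in every class $V_i$ that together span a $K_{p+1}$, this is clear. To keep such a $K_{p+1}$, we choose $x$ outside some fixed clique $\{v_1,\dots,v_p\}$. This is possible when $N_{V_i}(u^*)$ has at least two vertices; the remaining case, where $N_{V_i}(u^*)=\{v_i\}$ is a single vertex, needs a separate treatment, swapping the roles of $x$ and $y$.

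Second, the number of $K_{p+1}$'s must not increase. Deleting $u^*x$ kills all $K_p$'s in $N_G(u^*)$ through $x$. Adding $xy$ creates only the $K_p$'s through $xy$ that avoid $x$'s old role, which now live in $N(u^*)\setminus\{x\}$ and contain $x$; there are none, since $x\notin N(u^*)$ anymore. So the count drops weakly while $d(u^*)$ drops strictly. This contradicts the choice of $(G,u^*)$ with $d_G(u^*)$ minimum, provided the first two extremal conditions are preserved.

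\textbf{Keeping the extremal conditions.} If the count drops strictly, $G$ was not minimal, a contradiction. If it stays equal, the edge count is equal, so $G''$ is also a valid extremal choice, and the degree minimality gives the contradiction. The delicate point I expect is ensuring a vertex $x$ avoiding the only $K_{p+1}$ exists. I would handle it by taking $x$ so that $u^*x$ lies in the fewest copies of $K_{p+1}$ and arguing via the structure from Claims~\ref{CLAIM3.5}--\ref{CLAIM3.7}: vertices of $\overline U_i$ have almost complete cross-neighborhoods, so the $K_p$'s in $N(u^*)$ are abundant unless $u^*$ has very few neighbors in some class.
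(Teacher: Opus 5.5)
Your skeleton matches the paper's. If some endpoint of the missing cross-edge $xy$ is not adjacent to $u^*$, adding $xy$ creates no new $K_{p+1}$ and contradicts the maximality of $e(G)$. Otherwise you replace $u^*x$ by $xy$. This keeps $e(G)$ and cannot increase the clique count, since the new neighborhood of $u^*$ induces $G[N_G(u^*)\setminus\{x\}]$. It should then contradict the minimality of $d_G(u^*)$. Those counting steps are fine.

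\textbf{The gap.} It is exactly the step you flag as delicate: you never show that the modified graph $G''$ is still non-$p$-partite. This is needed both for the minimality of $\#K_{p+1}$ and for $(G'',u^*)$ to be an admissible pair. Your case split by $|N_{V_i}(u^*)|$ does not settle it, for three reasons.
\begin{itemize}
\item The vertex $x$ is not yours to choose; it is an endpoint of the given missing edge.
\item If $x=v_i$, another neighbor $v_i'\in N_{V_i}(u^*)$ does not give a $K_{p+1}$ avoiding $u^*x$. Nothing makes $v_i'$ adjacent to the other $v_j$, since completeness of $G-\{u^*\}$ is precisely what you are proving.
\item If $N_{V_i}(u^*)=\{x\}$, deleting $u^*x$ actually makes $G''$ $p$-partite, because $u^*$ can then be placed into $V_i$.
\end{itemize}
The suggested fallback (choosing $x$ to minimize the cliques through $u^*x$ and invoking Claims~\ref{CLAIM3.5}--\ref{CLAIM3.7}) is not an argument, and none of that structure is needed.

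\textbf{The fix.} The missing idea is one line, and it is the one the paper uses. Since $v_iv_j\in E(G)$ while $xy\notin E(G)$, we have $(x,y)\neq(v_i,v_j)$, so $x\neq v_i$ or $y\neq v_j$. In your second case both endpoints lie in $N_G(u^*)$, so you may rename them so that $x\neq v_i$. Your ``swap the roles of $x$ and $y$'' is the right move, but it must be triggered by $x=v_i$, not by $|N_{V_i}(u^*)|=1$. Deleting $u^*x$ then leaves the clique on $\{u^*,v_1,\dots,v_p\}$ intact. Hence $G''$ is non-$p$-partite, and $\{v_1,\dots,v_p\}$ still witnesses $\chi(G''-\{u^*\})=p$. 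With this in place, your degree-minimality contradiction goes through exactly as in the paper.
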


\begin{proof}
Suppose, to the contrary, that $G-\{u^*\}$ is not complete $p$-partite.
Then $M\neq\varnothing$, so there exists a missing cross-edge
$xy\in M$ with $x\in V_i$ and $y\in V_j$ for some distinct
$i,j\in[p]$. In particular,
$xy\notin E(G).$
Recall that $v_i\in V_i$, $v_j\in V_j$, and
$v_iv_j\in E(G)$. Hence $(x,y)\neq(v_i,v_j)$, and therefore
$x\neq v_i$ or $y\neq v_j$.
By interchanging the roles of $(x,V_i,v_i)$ and $(y,V_j,v_j)$
if necessary, we may assume that
$x\neq v_i.$
We distinguish two cases according to whether $x$ is adjacent to $u^*$.

\medskip
\noindent\textbf{Case 1.} $x\notin N_G(u^*)$.

Let $G'$ be the graph obtained from $G$ by adding the edge $xy.$
Then
$e(G')=e(G)+1.$
Moreover, the fixed copy of $K_{p+1}$ on
$\{u^*,v_1,\ldots,v_p\}$ remains unchanged, so $G'$ is non-$p$-partite.

We claim that adding $xy$ creates no new copy of $K_{p+1}$. Indeed, any
copy of $K_{p+1}$ in $G'$ that is not already present in $G$ must contain
the new edge $xy$. If such a copy avoids $u^*$, then it is contained in
$G'-\{u^*\}$. However, $G'-\{u^*\}$ is still $p$-partite with parts
$V_1,\ldots,V_p$, since $xy$ is a cross-edge between $V_i$ and $V_j$.
This is impossible.

On the other hand, if such a copy contains $u^*$, then, since it also
contains $x$, it must contain the edge $u^*x$. But
$x\notin N_G(u^*)$, and the edge $u^*x$ was not added in the construction
of $G'$. This is again impossible. Therefore,
$\#K_{p+1}(G')=\#K_{p+1}(G).$
Thus $G'$ is an $n$-vertex non-$p$-partite graph satisfying
$e(G')\geq e(Y_{n,p,s})$, with the same number of copies of $K_{p+1}$ as $G$ but
with more edges. This contradicts the choice of $G$ with $e(G)$ as large
as possible subject to minimizing $\#K_{p+1}(G)$.

\medskip
\noindent\textbf{Case 2.} $x\in N_G(u^*)$.

Let $G'$ be the graph obtained from $G$ by deleting the edge $u^*x$ and adding the edge $xy.$
Then $e(G')=e(G),$
so $G'$ still satisfies $e(G)\geq e(Y_{n,p,s})$. Since $x\neq v_i$, deleting the
edge $u^*x$ does not destroy the fixed copy of $K_{p+1}$ on
$\{u^*,v_1,\ldots,v_p\}$. Hence $G'$ remains non-$p$-partite.

We again claim that the addition of $xy$ creates no new copy of
$K_{p+1}$. Indeed, any copy of $K_{p+1}$ in $G'$ that is not present in
$G$ must contain $xy$. If it avoids $u^*$, then it is contained in
$G'-\{u^*\}$, which is $p$-partite with parts $V_1,\ldots,V_p$, a
contradiction. If it contains $u^*$, then, since it contains $x$, it
requires the edge $u^*x$, which was deleted in the construction of $G'$.
Thus no new copy of $K_{p+1}$ is created, and consequently
$\#K_{p+1}(G')\leq \#K_{p+1}(G).$

Since $G'$ is an $n$-vertex non-$p$-partite graph satisfying
$e(G')\geq e(Y_{n,p,s})$, the minimality of $\#K_{p+1}(G)$ implies that equality
$\#K_{p+1}(G')=\#K_{p+1}(G)$ must hold.
Together with $e(G')=e(G)$, this shows that $G'$ satisfies the same first
two extremal conditions as $G$.

Furthermore, $G'-\{u^*\}$ is $p$-partite with parts
$V_1,\ldots,V_p$. It also contains the copy of $K_p$ induced by
$\{v_1,\ldots,v_p\}$, since the only deleted edge is $u^*x$. Therefore,
$\chi\bigl(G'-\{u^*\}\bigr)=p.$
Hence $(G',u^*)$ is an admissible pair in the third extremal choice.
However,
$d_{G'}(u^*)=d_G(u^*)-1<d_G(u^*),$
contradicting the choice of $(G,u^*)$ with $d_G(u^*)$ as small as
possible.

In both cases we obtain a contradiction. Hence $M=\varnothing$. Therefore,
$G-\{u^*\}$ is a complete $p$-partite graph with parts
$V_1,\ldots,V_p$.
\end{proof}

Assume without loss of generality that $n_1\geq n_2\geq \cdots \geq n_p$, where $n_i=|V_i|$.
Set $k_i=|N_G(u^*)\cap V_i|$ for each $i\in[p]$.
Since $u^*$ is contained in a copy of $K_{p+1}$, and
$V_1,\ldots,V_p$ are the color classes of a proper $p$-coloring of
$G-\{u^*\}$, this copy must contain exactly one vertex from each $V_i$ besides $u^*$.
Hence $k_i\geq 1$ for every $i\in[p]$.

\begin{claim}\label{CLAIM3.10}
We have $n_1-n_p\leq 1$.
\end{claim}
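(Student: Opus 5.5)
We argue by contradiction: assume $n_1-n_p\ge 2$, and move one vertex from $V_1$ to $V_p$ to reach a new graph $G'$ that contradicts one of the three extremal choices.

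\textbf{Edge count of $G$.} By Claim~\ref{CLAIM3.9}, $G-\{u^*\}$ is the complete $p$-partite graph on $V_1,\ldots,V_p$. Hence
\[
e(G)=\sum_{1\le i<j\le p}n_in_j+\sum_{i=1}^p k_i .
\]
Every copy of $K_{p+1}$ in $G$ contains $u^*$, and $G[N_G(u^*)]$ is complete $p$-partite with parts $N_G(u^*)\cap V_i$. Therefore
\[
\#K_{p+1}(G)=\prod_{i=1}^p k_i .
\]

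\textbf{Choosing the vertex to move.} We pick $w\in V_1$ and move it to $V_p$. We choose $w\notin N_G(u^*)$ if $k_1<n_1$. Otherwise $k_1=n_1\ge 2$, and we pick any $w\ne v_1$.

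\textbf{The modified graph.} Let $G'$ have $G'-\{u^*\}$ complete $p$-partite with parts $V_1\setminus\{w\},V_2,\ldots,V_{p-1},V_p\cup\{w\}$. Give $u^*$ the same neighbourhood as in $G$.
\begin{itemize}
\item The cross-edge count changes by $(n_1-1)-n_p\ge 1$, because $w$ loses its $n_p$ neighbours in $V_p$ and gains the $n_1-1$ vertices of $V_1\setminus\{w\}$. So $e(G')>e(G)\ge e(Y_{n,p,s})$.
\item The clique $\{u^*,v_1,\ldots,v_p\}$ survives, since $w\ne v_1$ and $w\notin V_p$. Hence $G'$ is non-$p$-partite.
\item Every $K_{p+1}$ of $G'$ still contains $u^*$. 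The new neighbour classes of $u^*$ are $k_1'=k_1-[w\in N(u^*)]$ and $k_p'=k_p+[w\in N(u^*)]$, with all other $k_i$ unchanged.
\end{itemize}

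\textbf{Comparing clique counts.} If $w\notin N(u^*)$, the product $\prod k_i$ is unchanged. Then $G'$ has the same number of cliques and strictly more edges, which contradicts the second extremal choice.

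If $w\in N(u^*)$, then $k_1=n_1$, and the product changes from $k_1k_p$ to $(k_1-1)(k_p+1)$. This is at most $k_1k_p$ exactly when $k_1\le k_p+1$.

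\textbf{Main obstacle: the case $k_1=n_1$ and $k_1>k_p+1$.} Here moving $w$ can increase the count, so the first move does not work. I would then exploit the slack in edges: $e(G')-e(G)=n_1-1-n_p\ge 1$. I would delete the edge $u^*w$ and compensate with the extra cross-edge. This keeps $e\ge e(Y_{n,p,s})$ and gives $k_1'=k_1-1$ and $k_p'=k_p$, so the product strictly decreases because $\prod_{i\ne 1}k_i\ge 1$. That contradicts the minimality of $\#K_{p+1}(G)$.

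\textbf{Checking the compensating move.} Concretely, $G'$ is obtained by moving $w$ to $V_p$ and removing $u^*w$. Its edge count is $e(G)+(n_1-1-n_p)-1\ge e(G)$, valid as $n_1-n_p\ge 2$. The clique on $\{u^*,v_1,\ldots,v_p\}$ persists since $w\ne v_1$, so $G'$ is still non-$p$-partite, and its clique count is $(k_1-1)\prod_{i\ge2}k_i<\prod_i k_i$. This deletion trick in fact handles the whole case $w\in N(u^*)$ uniformly. Combined with the $w\notin N(u^*)$ case, every possibility yields a contradiction, so $n_1-n_p\le1$. The delicate point to double-check is that $e(G')\ge e(G)$ relies on $n_1-n_p\ge 2$ exactly.
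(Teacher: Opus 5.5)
Your proof is correct. Its core move is the same as the paper's: shift a vertex $w$ from $V_1$ to $V_p$, which gains $n_1-1-n_p\ge 1$ cross-edges, and spend one unit of that slack deleting an edge at $u^*$. Where you differ is the case split, and that changes which extremal property you contradict.

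The paper splits on $k_1\ge 2$ versus $k_1=1$, and in both cases it produces a graph with strictly fewer copies of $K_{p+1}$. In the case $k_1=1$ it must take $w\notin N_G(u^*)$ and delete an edge $u^*z$ in some other class. That requires a separate degree count, $d_G(u^*)\ge e(Y_{n,p,s})-e(T_{n-1,p})\ge p+1$ for large $n$, to guarantee that some $k_i\ge 2$.

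You split on $k_1<n_1$ versus $k_1=n_1$ instead.
\begin{itemize}
\item When a non-neighbour $w\in V_1$ exists, you simply move it. This keeps $\prod k_i$ fixed and strictly increases $e$, so it contradicts the secondary, edge-maximising extremal choice.
\item When $k_1=n_1$, you perform exactly the paper's move-and-delete step with $w\neq v_1$.
\end{itemize}
Your route avoids the degree-counting step and any use of $n$ being large, but it relies on the tie-breaking condition on $e(G)$. The paper's version uses only clique-minimality.

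In a write-up, drop the intermediate comparison of $(k_1-1)(k_p+1)$ with $k_1k_p$ and the ``main obstacle'' paragraph. The move-and-delete step already disposes of every case with $w\in N_G(u^*)$, so the clean two-case argument is all you need.
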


\begin{proof}
Suppose to the contrary that $n_1-n_p\geq 2$.
Recall that $G-\{u^*\}$ is a complete $p$-partite graph with parts
$V_1,\ldots,V_p$, where $|V_i|=n_i$ and
$k_i=|N_G(u^*)\cap V_i|$ for every $i\in[p]$.

Since $G$ is non-$p$-partite, we have $k_i\geq 1$ for every $i\in[p]$.
Indeed, if $k_i=0$ for some $i$, then placing $u^*$ into $V_i$ would give a
$p$-partition of $G$, a contradiction. Moreover, since $G-\{u^*\}$ is
$p$-partite, every copy of $K_{p+1}$ in $G$ contains $u^*$. Therefore
$\#K_{p+1}(G)=k_1k_2\cdots k_p.$

Suppose first that $k_1\geq 2$. Choose a vertex
$w\in N_G(u^*)\cap V_1$. Let $G'$ be the graph obtained from $G$ by deleting
all edges between $w$ and $V_p\cup\{u^*\}$, and adding all edges between
$w$ and $V_1\setminus\{w\}$.
Let $V'_1=V_1\setminus\{w\}$,
$V'_p=V_p\cup\{w\}$, and
$V'_\ell=V_\ell$ for $\ell\in[p]\setminus\{1,p\}$.
By construction, $G'-\{u^*\}$ is a complete $p$-partite graph with parts
$V'_1,\ldots,V'_p$. Since $k_1\geq 2$, the vertex $u^*$ still has a neighbor
in $V'_1$ after the edge $u^*w$ is deleted. Also, $u^*$ has a neighbor in
each of the other parts. Hence $G'$ contains a copy of $K_{p+1}$, and so
$G'$ is non-$p$-partite.
Furthermore,
\begin{align*}
e(G')-e(G)
=|V_1\setminus\{w\}|-|V_p|-1
=(n_1-1)-n_p-1
=n_1-n_p-2\geq 0.
\end{align*}
Thus $G'$ satisfies the same edge condition as $G$.

The number of neighbors of $u^*$ in the new first part $V'_1$ is $k_1-1$,
while the numbers of neighbors of $u^*$ in all other parts are unchanged.
Since $G'-\{u^*\}$ is $p$-partite, every copy of $K_{p+1}$ in $G'$ contains
$u^*$. Hence
$
\#K_{p+1}(G')=(k_1-1)k_2\cdots k_p
<k_1k_2\cdots k_p=\#K_{p+1}(G),
$
contradicting the extremal choice of $G$.

It remains to consider the case $k_1=1$. Since $G-\{u^*\}$ is a complete
$p$-partite graph on $n-1$ vertices, we have
$e(G-\{u^*\})\le e(T_{n-1,p})$.
Since $K$ is a $p$-partite graph on $n-1$ vertices, we have
$e(K)\leq e(T_{n-1,p}).$
Therefore,
\begin{align*}
\sum_{i=1}^p k_i
=d_G(u^*)
&=e(G)-e(K)\\
&\geq e(T_{n,p})-\big\lfloor\frac np\big\rfloor+s+1-e(T_{n-1,p})\\
&=n-\big\lceil\frac np\big\rceil-\big\lfloor\frac np\big\rfloor+s+1.
\end{align*}
Since $p$ and $s$ are fixed and $n$ is sufficiently large, the last quantity is at least $p+1$.
Hence $\sum_{i=1}^p k_i\geq p+1$.
Since $k_1=1$ and $k_i\geq 1$ for every
$i\in[p]$, there exists an index $i\in[p]\setminus\{1\}$ such that $k_i\geq 2$.

Since $n_1-n_p\geq 2$ and $k_1=1$, we may choose a vertex
$w\in V_1\setminus N_G(u^*)$. Also choose a vertex
$z\in N_G(u^*)\cap V_i$. Let $G'$ be the graph obtained from $G$ by deleting
all edges between $w$ and $V_p$, deleting the edge $u^*z$, and adding all
edges between $w$ and $V_1\setminus\{w\}$.

Again let
$$
V'_1=V_1\setminus\{w\},\quad
V'_p=V_p\cup\{w\},\quad
V'_\ell=V_\ell
$$
for $\ell\in[p]\setminus\{1,p\}$.
Then $G'-\{u^*\}$ is a complete $p$-partite graph with parts
$V'_1,\ldots,V'_p$. Since $w\notin N_G(u^*)$, the number of neighbors of
$u^*$ in the new first part $V'_1$ is still $k_1=1$. Since $k_i\geq 2$, after
deleting the edge $u^*z$, the vertex $u^*$ still has a neighbor in the part
corresponding to $V_i$. The numbers of neighbors in all other parts remain
positive. Hence $G'$ contains a copy of $K_{p+1}$, and therefore $G'$ is
non-$p$-partite.
Moreover,
\begin{align*}
e(G')-e(G)
=|V_1\setminus\{w\}|-|V_p|-1
=(n_1-1)-n_p-1
=n_1-n_p-2\geq 0.
\end{align*}
Thus $G'$ also satisfies the same edge condition as $G$.

The construction decreases the number of neighbors of $u^*$ in $V_i$ by one
and leaves the corresponding neighbor counts in all other parts unchanged.
Consequently,
$$
\#K_{p+1}(G')=k_1\cdots k_{i-1}(k_i-1)k_{i+1}\cdots k_p
<k_1k_2\cdots k_p
=\#K_{p+1}(G),
$$
again contradicting the extremal choice of $G$.

Therefore our assumption $n_1-n_p\geq 2$ was false, and hence
$n_1-n_p\leq 1$.
\end{proof}

Recall that $T_1,\ldots,T_p$ are the partite sets of $T_{n,p}$ in
Definition \ref{DEF1.1}, ordered so that
$|T_1|\ge |T_2|\ge\cdots\ge |T_p|$, and that $u^*\in T_1$.
By Claims \ref{CLAIM3.9} and \ref{CLAIM3.10},
the graph $G-\{u^*\}$ is a complete $p$-partite graph whose part sizes
$n_1\geq n_2\geq\cdots\geq n_p$ differ by at most one. Therefore,
$G-\{u^*\}\cong T_{n-1,p}.$
By Definition \ref{DEF1.1},
it is clear that $|T_{i+1}|=n_i$ for $i\in [p-1]$ and $|T_1|=n_p+1$.
We reorder $k_1,\dots,k_p$ into $k_1',\dots,k_p'$,
where $k_1'\geq k_2'\geq\cdots\geq k_p'$.

\begin{claim}\label{CLAIM3.11}
We have $k_i'=n_i$ for $i\in [p-2]$, $k'_{p-1}=s$, and $k'_p=1$.
\end{claim}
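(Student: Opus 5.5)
We now know $G-\{u^*\}\cong T_{n-1,p}$ with parts $V_1,\dots,V_p$, that $1\le k_i\le n_i$, and that $\#K_{p+1}(G)=\prod_{i=1}^p k_i$. The edge condition becomes a lower bound on $\sum k_i=d_G(u^*)=e(G)-e(T_{n-1,p})$. Since $e(T_{n,p})-e(T_{n-1,p})=n-\lceil n/p\rceil$, this gives
$$\sum_{i=1}^p k_i\ \ge\ n-\Big\lceil\frac np\Big\rceil-\Big\lfloor\frac np\Big\rfloor+s+1 .$$
Write $n_{(1)}\ge\cdots\ge n_{(p)}$ for the part sizes. Because they differ by at most one, $\sum_{i\le p-2}n_{(i)}=n-1-n_{(p-1)}-n_{(p)}$, where $n_{(p-1)}+n_{(p)}$ is the sum of the two smallest parts of $T_{n-1,p}$. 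A short case check on $n\bmod p$ should give
$$\sum k_i\ \ge\ \sum_{i\le p-2}n_{(i)}+s+1 .$$
So the problem reduces to the integer optimisation of minimising $\prod k_i$ subject to $1\le k_i\le n_i$ and this sum bound. By the extremal choice of $G$, the actual $(k_i)$ must be optimal; by the secondary choice (maximal $e(G)$) and the choice of minimal $d_G(u^*)$, we also expect $\sum k_i$ to equal the bound exactly.

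\textbf{Step 1 (at most one $k$ strictly between $1$ and its cap).} Suppose $k_a,k_b$ satisfy $1<k_a\le k_b$, with $k_b<n_b$. Replace them by $k_a-1,\,k_b+1$; this is realised by moving one edge of $u^*$ from $V_a$ to $V_b$, so $e(G)$ is unchanged, all $k_i$ stay at least $1$, and $G$ stays non-$p$-partite. Then
$$(k_a-1)(k_b+1)=k_ak_b-(k_b-k_a)-1<k_ak_b,$$
and since the other factors are positive, $\#K_{p+1}$ strictly drops. This contradicts the choice of $G$. The same move with roles reversed covers $k_a>k_b$ whenever the larger one is below its cap. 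Hence all but at most one $k_i$ lie in $\{1,n_i\}$.

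\textbf{Step 2 (exactly one $k_i$ equals $1$).} Suppose two indices have $k_a=k_b=1$. Since $\sum k_i$ exceeds $\sum_{i\le p-2}n_{(i)}$, some index $c$ has $k_c\ge2$. Two options are available. One is to move a $u^*$-edge from $V_c$ into $V_a$, giving $(k_c-1)\cdot2$ against $k_c$, which is not always smaller. The other is to compare directly with the sum constraint: having at least two indices at $1$ forces the remaining $p-2$ values to sum to at least $\sum_{i\le p-2}n_{(i)}+s-1$, which, with caps $n_i$, is only possible when $s$ is small and some caps are large. I expect to handle this by noting that the configuration $\{n_{(1)},\dots,n_{(p-2)},s,1\}$ has product $s\prod_{i\le p-2}n_{(i)}$, and by comparing products directly. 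Set $A=\prod$ of the large factors. Any competitor with two $1$'s must reach the same sum using only $p-2$ nonunit factors bounded by the caps. Since $n$ is large and $s$ is fixed, the caps exceed $s$, and a log-concavity argument shows that $\prod$ over $p-2$ values is at least the configuration above.

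\textbf{Step 3 (identify the values).} With one $k$ equal to $1$, at most one intermediate value $t$, and the rest at their caps, the sum condition says that the caps plus $t$ plus $1$ reach $\sum_{i\le p-2}n_{(i)}+s+1$. The product is minimised by putting the caps on the smallest available parts. Combined with the equality $\sum k_i=$ bound, obtained from minimality of $d_G(u^*)$ by deleting a $u^*$-edge, this gives $t=s$, caps equal to $n_1,\dots,n_{p-2}$ after reordering, and the remaining part $1$. Then $k'_i=n_i$ for $i\in[p-2]$, $k'_{p-1}=s$ and $k'_p=1$. The ordering claim $s\le n_{p-2}$ and $1\le s$ follows since $s$ is fixed and $n$ is large.

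\textbf{Main obstacle.} The main obstacle is Step 2 together with the bookkeeping of which parts receive the caps. Because the part sizes differ by one, "the smallest two parts" must be matched carefully with the floor/ceiling identity for $e(T_{n,p})-e(T_{n-1,p})$. My first attempt will be a direct exchange, swapping a cap between parts of sizes $n_{(i)}$ and $n_{(i)}+1$.
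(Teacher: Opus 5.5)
Your reduction to minimising $\prod k_i$ subject to $1\le k_i\le n_i$ and the sum condition is correct, and Step~1 (transferring a single $u^*$-edge from $V_a$ to $V_b$) is valid. The gap is in Step~3, which is where the claim is actually decided. Steps~1--2 do not determine \emph{which} parts are capped, and single-unit transfers cannot determine it.

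Take $p=3$, part sizes $(n_1,n_2,n_3)=(q+1,q,q)$ and $(k_1,k_2,k_3)=(s+1,q,1)$. Then:
the sum is $n_1+s+1$; only one value lies strictly between $1$ and its cap; exactly one value equals $1$; and no single transfer strictly lowers the product.
Yet this vector violates the claim, since $k'_1=q<n_1$ and $k'_2=s+1$. It is beaten by $(q+1,s,1)$, because $s(q+1)<(s+1)q$.

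Your statement that the product is minimised by ``putting the caps on the smallest available parts'' points the wrong way; it contradicts your own conclusion $k'_i=n_i$. It is also not argued. You list this bookkeeping yourself as the main obstacle, so as written the proof of the claim is incomplete.

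The paper settles this with a global argument rather than local moves. It sorts $k$ into $k'$ and shows $1\le k'_i\le n_i$ by a dominance count. It then observes that every integer vector $(x_i)$ with $1\le x_i\le n_i$ and $\sum x_i=\sum_{i\le p-2}n_i+s+1$ is realised by adding to $T_{n-1,p}$ a vertex with exactly $x_i$ neighbours in $V_i$. Hence $k'$, placed on the parts in sorted order, minimises $\prod x_i$ over all such vectors. Now let $i'\le p-2$ be least with $k'_{i'}<n_{i'}$:
\begin{itemize}
\item then $k'_{p-1}+k'_p\ge s+2$, so $k'_{p-1}\ge 2$;
\item moving one unit from position $p-1$ to position $i'$ strictly lowers the product, since $k'_{i'}\ge k'_{p-1}$.
\end{itemize}
So $k'_i=n_i$ for $i\le p-2$ and $k'_{p-1}+k'_p=s+1$. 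If $k'_p\ge 2$, then $\prod k'_i>s\prod_{i\le p-2}n_i=\#K_{p+1}(Y_{n,p,s})$, a contradiction.

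Two smaller corrections.
\begin{itemize}
\item Your Step~2 target, ``exactly one $k_i$ equals $1$'', is false for $s=1$, where the extremal vector is $(n_1,\dots,n_{p-2},1,1)$. No log-concavity argument is needed there. If $k_a=k_b=1$, the other $p-2$ values, each at most its cap, must sum to at least $\sum_{i\le p-2}n_{(i)}+s-1$. This already forces $s=1$ and the extremal configuration.
\item The equality $\sum k_i=\sum_{i\le p-2}n_i+s+1$ comes from the primary minimality of $\#K_{p+1}(G)$: deleting a $u^*$-edge into a part with $k_j\ge 2$ strictly lowers $\prod k_i$. It does not come from maximality of $e(G)$ or minimality of $d_G(u^*)$. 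The latter cannot be invoked for a graph with fewer edges.
\end{itemize}
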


\begin{proof}
We first prove that
\begin{align}\label{align-4}
e\big(Y_{n,p,s}\big)-e\big(T_{n-1,p}\big)
=\sum_{i=1}^{p-2}n_i+s+1.
\end{align}
In particular, the two smallest part sizes satisfy
$$
n_{p-1}+n_p
=|T_p|+(|T_1|-1)
=
\Big\lfloor\frac{n}{p}\Big\rfloor+
\Big\lceil\frac{n}{p}\Big\rceil
-1.
$$
Consequently,
$$
\begin{aligned}
\sum_{i=1}^{p-2}n_i
=(n-1)-(n_{p-1}+n_p)
=n-\Big\lceil\frac{n}{p}\Big\rceil
-\Big\lfloor\frac{n}{p}\Big\rfloor.
\end{aligned}
$$
Moreover, adding one vertex to a smallest part of $T_{n-1,p}$ produces
$T_{n,p}$, and hence
$
e(T_{n,p})-e(T_{n-1,p})
=n-\lceil\frac{n}{p}\rceil.
$
Combining these with \eqref{ALIGN-1},
we obtain
$$
\begin{aligned}
e\big(Y_{n,p,s}\big)-e\big(T_{n-1,p}\big)
&=
e\big(T_{n,p}\big)-e\big(T_{n-1,p}\big)
-
\Big\lfloor\frac{n}{p}\Big\rfloor
+s+1\\
&=
n-
\Big\lceil\frac{n}{p}\Big\rceil
-
\Big\lfloor\frac{n}{p}\Big\rfloor
+s+1
=
\sum_{i=1}^{p-2}n_i+s+1,
\end{aligned}
$$
which establishes \eqref{align-4}.

We then show that $e(G)=e\big(Y_{n,p,s}\big).$
Suppose otherwise. Since $G$ satisfies the edge condition, we have
$e(G)\ge e\big(Y_{n,p,s}\big)+1.$
Since $G-\{u^*\}\cong T_{n-1,p}$ and $e(G)=e\big(T_{n-1,p}\big)+\sum_{i=1}^{p}k_i,$
by \eqref{align-4}, we obtain
$$
\sum_{i=1}^{p}k_i>e\big(Y_{n,p,s}\big)-e\big(T_{n-1,p}\big)=\sum_{i=1}^{p-2}n_i+s+1.
$$
Since $n_i\ge1$ for every $i\in[p]$ and $s\ge1$, we have
$\sum_{i=1}^{p-2}n_i+s+1\ge p.$
Hence
$\sum_{i=1}^{p}k_i>p.$
Since $k_i\ge1$ for every $i\in[p]$, some $k_j$ is at least $2$.
Choose a vertex $z\in N_G(u^*)\cap V_j,$
and let $\widehat G$ be obtained from $G$ by deleting the edge $u^*z$.
Then $e(\widehat G)=e(G)-1\ge e\big(Y_{n,p,s}\big).$
Also, since $k_j\ge2$, the vertex $u^*$ still has a neighbor in every part of $G-\{u^*\}$.
Hence $\widehat G$ is still non-$p$-partite.
But
$$
\#K_{p+1}(\widehat G)=k_1\cdots k_{j-1}(k_j-1)k_{j+1}\cdots k_p<k_1k_2\cdots k_p=\#K_{p+1}(G),
$$
contradicting the extremal choice of $G$.
Therefore $e(G)=e\big(Y_{n,p,s}\big).$
It follows that
$$
\sum_{i=1}^{p}k_i=e(G)-e\big(T_{n-1,p}\big)=e\big(Y_{n,p,s}\big)-e\big(T_{n-1,p}\big)=\sum_{i=1}^{p-2}n_i+s+1.
$$

We now consider the rearranged vector $k_1'\ge k_2'\ge\cdots\ge k_p'$.
We first note that $1\le k_i'\le n_i$ for every $i\in[p]$.
Indeed, suppose that $k_i'>n_i$ for some $i\in[p]$.
Then at least $i$ of the numbers $k_1,\ldots,k_p$ are at least $k_i'$.
Hence at least $i$ of the numbers $n_1,\ldots,n_p$ are at least $k_i'$, since $k_j\le n_j$ for every $j\in[p]$.
 However, as $n_1\ge n_2\ge\cdots\ge n_p$ and $k_i'>n_i$,
there are at most $i-1$ such numbers among $n_1,\ldots,n_p$, a contradiction.
Thus $1\le k_i'\le n_i$ for every $i\in[p]$.
Moreover,
$$
\sum_{i=1}^{p}k_i'=\sum_{i=1}^{p}k_i=\sum_{i=1}^{p-2}n_i+s+1,
$$
and $\prod_{i=1}^{p}k_i'=\prod_{i=1}^{p}k_i.$
Since $G$ is chosen to minimize $\#K_{p+1}(G)$, the vector $(k_1',\ldots,k_p')$
minimizes $\prod_{i=1}^{p}x_i$ among all integer vectors $(x_1,\ldots,x_p)$ satisfying
$1\le x_i\le n_i$ for every $i\in[p],$ and $\sum_{i=1}^{p}x_i=\sum_{i=1}^{p-2}n_i+s+1.$
Indeed, if another such vector $(x_1,\ldots,x_p)$ had a smaller product,
then by starting with $T_{n-1,p}$ and adding a new vertex adjacent to exactly $x_i$ vertices in $V_i$ for each $i\in[p]$,
we would obtain an $n$-vertex non-$p$-partite graph satisfying the same edge condition and containing fewer copies of $K_{p+1}$, contradicting the choice of $G$.

We now prove that $k_i'=n_i$ for every $i\in[p-2]$.
Suppose not, and let $i'=\min\{i\in[p-2]:k_i'\ne n_i\}$.
Then $k_{i'}'\le n_{i'}-1$.
Since $\sum_{i=1}^{p}k_i'=\sum_{i=1}^{p-2}n_i+s+1$,
we have
$$
k_{p-1}'+k_p'=\sum_{i=1}^{p-2}n_i+s+1-\sum_{i=1}^{p-2}k_i'\ge s+2.
$$
Hence $k_{p-1}'\ge2.$
Define integers $k_1'',\ldots,k_p''$ by $k_i''=k_i'$ for $i\in[p]\setminus\{i',p-1\}$,
and $k_{i'}''=k_{i'}'+1,k_{p-1}''=k_{p-1}'-1$.
These integers are valid neighbor counts. Indeed,
$k_{i'}''=k_{i'}'+1\le n_{i'}, k_{p-1}''=k_{p-1}'-1\ge1$,
and
$1\le k_i''\le n_i$ for every $i\in[p]$.
Moreover,
$$
\sum_{i=1}^{p}k_i''=\sum_{i=1}^{p}k_i'=\sum_{i=1}^{p-2}n_i+s+1.
$$

Let $G'$ be the graph obtained from $T_{n-1,p}$ by adding a new vertex joined to exactly $k_i''$ vertices in $V_i$ for each $i\in[p]$.
 Then
$$
e(G')=e\big(T_{n-1,p}\big)+\sum_{i=1}^{p}k_i''=e\big(T_{n-1,p}\big)+\sum_{i=1}^{p}k_i'=e\big(Y_{n,p,s}\big).
$$
Moreover, since every $k_i''$ is positive, $G'$ contains a copy of $K_{p+1}$, and hence is non-$p$-partite.
Since the sequence $k_1',k_2',\ldots,k_p'$ is non-increasing and $i'\le p-2$,
we have $k_{i'}'\ge k_{p-1}'$.
Therefore
$$
(k_{i'}'+1)(k_{p-1}'-1)=k_{i'}'k_{p-1}'-(k_{i'}'-k_{p-1}'+1)<k_{i'}'k_{p-1}'.
$$
Consequently,
$$
\#K_{p+1}(G')=k_1''k_2''\cdots k_p''<k_1'k_2'\cdots k_p'=\#K_{p+1}(G),
$$
contradicting the extremal choice of $G$.
Hence $k_i'=n_i$ for every $i\in[p-2]$.

It remains to determine $k_{p-1}'$ and $k_p'$.
 From $\sum_{i=1}^{p}k_i'=\sum_{i=1}^{p-2}n_i+s+1$ and $k_i'=n_i$ for every $i\in[p-2]$,
we have $k_{p-1}'+k_p'=s+1$.
We claim that $k_p'=1$.
Suppose not. Since $k_p'\ge1$, this means $k_p'\ge2$.
As $k_{p-1}'\ge k_p',$ both $k_{p-1}'$ and $k_p'$ are at least $2$.
Therefore
$$
k_{p-1}'k_p'-s=k_{p-1}'k_p'-(k_{p-1}'+k_p'-1)=(k_{p-1}'-1)(k_p'-1)>0.
$$
Thus $k_{p-1}'k_p'>s$.
Since $k_i'=n_i$ for every $i\in[p-2],$ it follows that
$$
\#K_{p+1}(G)=k_1'k_2'\cdots k_p'=\Big(\prod_{i=1}^{p-2}n_i\Big)k_{p-1}'k_p'>\Big(\prod_{i=1}^{p-2}n_i\Big)s=\#K_{p+1}\big(Y_{n,p,s}\big),
$$
contradicting the extremal choice of $G$.
Hence $k_p'=1,$ and therefore $k_{p-1}'=s$.
This proves the claim.
\end{proof}

By Claim \ref{CLAIM3.11}, we have
$$
\#K_{p+1}(G)=k_1k_2\cdots k_p=k_1'k_2'\cdots k_p'=\Big(\prod_{i=1}^{p-2}n_i\Big)s=\#K_{p+1}\big(Y_{n,p,s}\big).
$$
Therefore, for both case $p=2$ and $p\geq 3$, we conclude that
$\#K_{p+1}(G)\geq \#K_{p+1}\big(Y_{n,p,s}\big).$
This completes the proof of Theorem \ref{thm1.1}.
\end{proof}


\begin{thebibliography}{99}
\setlength{\itemsep}{0pt}


\bibitem {Amin2013}
K. Amin, J. Faudree, R.J. Gould, E. Sidorowicz,
On the non-$(p-1)$-partite $K_p$-free graphs,
\emph{Discuss. Math. Graph Theory} \textbf{33} (1) (2013) 9--23.


\bibitem {Brouwer1981}
A. E. Brouwer, Some lotto numbers from an extension of Tur\'{a}n's theorem, Afdeling Zuivere Wiskunde, Vol. 152, Mathematisch Centrum, Amsterdam, 1981.

\bibitem {Erdos19626}
P. Erd\H{o}s, On a theorem of Rademacher-Tur\'{a}n,
\emph{Ill. J. Math.} \textbf{6} (1962) 122--127.

\bibitem {Erdos19627}
P. Erd\H{o}s, On the number of complete subgraphs contained in certain graphs,
\emph{Magyar Tud. Akad. Mat. Kut. Int. K{\"o}zl.} \textbf{7} (1962) 459--464.

\bibitem {Erdos1964}
P. Erd\H{o}s, On the number of triangles contained in certain graphs,
\emph{Canad. Math. Bull.} \textbf{7} (1964) 53--56.

\bibitem {Erdos-1967}
P. Erd\H{o}s,  Some recent results on extremal problems in graph theory (Results),
in: Theory of Graphs (International Symposium Rome, 1966), Gordon and
Breach, New York, Dunod, Paris, (1967), pp. 117--123.

\bibitem {Erdos-1968}
P. Erd\H{o}s, On some new inequalities concerning extremal properties of
graphs, in: Theory of Graphs (Proceedings of the Colloquium, Tihany, 1966),
Academic Press, New York, (1968), pp. 77--81.

\bibitem {Fisher1989}
D.C. Fisher, Lower bounds on the number of triangles in a graph,
\emph{J. Graph Theory} \textbf{13} (1989) 505--512.

\bibitem {Kang2005}
M. Kang, O. Pikhurko, Maximum $K_{r+1}$-free graphs which are not $r$-partite,
\emph{Mat. Stud.} \textbf{24} (2005), 12--20.


\bibitem{Komlos1996}
J. Koml\'{o}s, M. Simonovits, Szemer\'{e}di's regularity lemma and its applications in graph theory,
in: D. Mikl\'{o}s, V. S\'{o}s, T. Sz\H{o}nyi (Eds.), Combinatorics, Paul Erd\H{o}s is eighty, Vol. 2 (Keszthely, 1993), 295--352,
Bolyai Soc. Math. Stud., 2, J\'{a}nos Bolyai Math. Soc., Budapest, 1996.


\bibitem{LFP}
Y. T. Li, L. H. Feng, Y. J. Peng, Spectral supersaturation: Triangles and bowties, \emph{European J.
Combin.} \textbf{128} (2025), No. 104171.

\bibitem{LLP}
Y. T. Li, L. Lu, Y. J. Peng, A spectral Erd\H{o}s--Rademacher theorem,\emph{ Adv. in Appl. Math.}
\textbf{158} (2024), No. 102720.

\bibitem{LP2023}
Y. T. Li, Y. J. Peng, Refinement on spectral Tur\'{a}n's theorem,\emph{ SIAM J. Discrete Math.}
\textbf{37} (2023), no. 4, 2462–2485.


\bibitem {LM1975}
L. Lov\'{a}sz, M. Simonovits, On the number of complete subgraphs of a graph,
\emph{in: Proc. of Fifth British Comb. Conf. Aberdeen} (1975) 431--441.

\bibitem {LM1983}
L. Lov\'{a}sz, M. Simonovits, On the number of complete subgraphs of a graph II,
\emph{in: Studies in Pure Math, Birkh\"{a}user (to the Memory of Paul Tur\'{a}n)} (1983) 459--495.

\bibitem{LO}
H. Liu, O. Pikhurko, K. Staden, The exact minimum number of triangles in  graphs of given order and size,
\emph{Forum Math. Pi} \textbf{8} (2020), e8, 144 pp.


\bibitem{Mubayi2010}
D. Mubayi, Counting substructures I: Color critical graphs,
\emph{Adv. Math.} \textbf{225} (2010) 2731--2740.

\bibitem{MaYuan2025}
J. Ma, L.-T. Yuan, Supersaturation beyond color-critical graphs,
\emph{Combinatorica} \textbf{45} (2025), no. 2, Paper No. 18, 40 pp.

\bibitem{Moon1965}
J. W. Moon, On the number of complete subgraphs of a graph, \emph{Canad. Math. Bull.} \textbf{8} (1965) 831--834.

\bibitem {Mantel1907}
W. Mantel, Problem 28: Solution by H. Gouwentak, W. Mantel, J. Teixeira de Mattes, F.
Schuh and W. A. Wythoff, \emph{Wiskundige Opgaven} \textbf{10} (1907) 60--61.

\bibitem {Nikiforov2011}
V. Nikiforov, The number of cliques in graphs of given order and size,
\emph{Trans. Amer. Math. Soc.} \textbf{363} (2011) 1599--1618.

\bibitem{PY2017}
O. Pikhurko, Z. B. Yilma, Supersaturation problem for color-critical graphs,
\emph{J. Combin. Theory Ser. B} \textbf{123} (2017) 148--185.

\bibitem{PA2017}O. Pikhurko, A. Razborov, Asymptotic structure of graphs with the minimum number of triangles,
\emph{ Combin. Probab. Comput.} \textbf{26} (2017) 138--160.

\bibitem{Reiher2016}
C. Reiher, The clique density theorem,
\emph{Ann. Math.} \textbf{184} (2016) 683--707.

\bibitem {Simonovits1966}
M. Simonovits, A method for solving extremal problems in graph theory, stability problems,
in: Theory of Graphs (Proc. Colloq., Tihany, 1966),
\emph{Academic Press, New York} (1968), pp. 279--319.

\bibitem{Tyomkyn2015}
M. Tyomkyn, A. J. Uzzell, Strong Tur\'{a}n stability,
\emph{Electron. J. Combin.} \textbf{22} (2015), no. 3, P3.9.

\bibitem {Turan}
P. Tur\'{a}n, On an extremal problem in graph theory,
\emph{Mat. Fiz. Lapok} \textbf{48} (1941) 436--452.




\end{thebibliography}
\end{document}